\documentclass[11pt]{amsart}

\usepackage{bm}
\usepackage{fullpage}
\usepackage{amssymb}
\usepackage{amsmath,amsfonts,amsthm}
\usepackage{hyperref}
\usepackage{color}
\usepackage{cite}

\newtheorem{theorem}{Theorem}

\newtheorem{question}{Question}

\newtheorem{claim}{Claim}

\newtheorem{corollary}{Corollary}

\newtheorem{remark}{Remark}
%

\usepackage{tikz}
\usetikzlibrary{shapes.geometric}
\usepackage{pst-poly}
\usepackage{graphicx}
\usepackage{subcaption}
\usetikzlibrary{decorations.markings}
\tikzstyle{vertex}=[circle, draw, inner sep=0pt, minimum size=6pt]



\DeclareMathOperator\G{\mathcal{G}}

\DeclareMathOperator\F{\mathcal{F}}

\DeclareMathOperator\C{\mathcal{C}}
\DeclareMathOperator\PP{\mathcal{P}}

\def\isdef{\mbox {$\ \stackrel{\rm def}{=} \ $}}

\title{   partitioning of a graph  into induced subgraphs not containing prescribed cliques}

\author{Yaser Rowshan and Ali Taherkhani}
\address{Y. Rowshan, 
	Department of Mathematics, Institute for Advanced Studies in Basic Sciences (IASBS), Zanjan 45137-66731, Iran}
\email{y.rowshan@iasbs.ac.ir, y.rowshan.math@gmail.com}
\address{A. Taherkhani, 
	Department of Mathematics, Institute for Advanced Studies in Basic Sciences (IASBS), Zanjan 45137-66731, Iran}
\email{ali.taherkhani@iasbs.ac.ir}
\begin{document}
	\maketitle 
	
	\begin{abstract} 
		
Let  $K_p$ be a complete  graph of order  $p\geq 2$. A $K_p$-free $k$-coloring of a graph $H$ is a partition of $V(H)$ into
$V_1, V_2\ldots,V_k$ such that  $H[V_i]$ does not contain $K_p$ for each $i\leq k $. 
In 1977 Borodin and Kostochka  conjectured that any graph $H$ with maximum degree $\Delta(H)\geq 9$ and without $K_{\Delta(H)}$ as a 
subgraph has chromatic number at most $\Delta(H)-1$.
As analogue of the Borodin-Kostochka conjecture, 
we prove that if  $p_1\geq \cdots\geq p_k\geq 2$, $p_1+p_2\geq 7$, $\sum_{i=1}^kp_i=\Delta(H)-1+k$, and $H$ does not contain $K_{\Delta(H)}$   as a subgraph, 
then there is a partition of $V(H)$ into $V_1,\ldots,V_k$ such that for each $i$, $H[V_i]$ does not contain $K_{p_i}$.
In particular, if $p\geq 4$ and $H$ does not contain $K_{\Delta(H)}$   as a subgraph,
then $H$ admits a $K_p$-free $\lceil{\Delta(H)-1\over p-1}\rceil$-coloring. 
Catlin showed that every connected non-complete graph $H$ with
$\Delta(H)\geq 3$ has a $\Delta(H)$-coloring such that one of  the color classes is maximum $K_2$-free subset (maximum independent set).
In this regard, we show that there is a partition of vertices of $H$ into $V_1$ and $V_2$ such that 
$H[V_1]$ does not contain $K_{p}$, $H[V_2]$ does not contain $K_{q}$, and $V_1$ is a maximum $K_p$-free subset of V(H)
if   $p\geq 4$,  $q\geq 3$, $p+q=\Delta(H)+1$, and its clique number  $\omega(H)=p$.
	\end{abstract}
	
	\section{Introduction} 
	All graphs considered in this paper are undirected, simple, and finite. For  a graph  $H=(V(H),E(H))$,  
	 the neighborhood of $v$, denoted by  $N_H(v)$ (or for simply $N(v)$), is  the set of neighbors of $v$ in $H$.
	 Also, the closed neighborhood of $v$, denoted by $N[v]$, is defined as
$N[v] = N(v) \cup \{v\}$.
	Let   $\deg_H{(v)}$ (or for simply $\deg{(v)}$) be $|N_H(v)|$.
	We denote the  maximum degree and minimum degree of $H$ by $\Delta(H)$ and $\delta(H)$, respectively.   
	Suppose that  $W$ is a subset of $V(H)$. The induced subgraph $H[W]$ is the graph 
	whose vertex set is $W$ and whose edge set consists of all of the edges in $E(H)$ that have both endpoints in $W$.

	We recall that an independent set  in  a graph $H$ is a subset of its vertices that does not contain any  edge of $H$. 
	The independence number of a graph $H$ is the size of a maximum independent set and is denoted by $\alpha(H)$. 
	A  clique in a graph $H$ is a set of pairwise adjacent vertices. 
 The clique number of a graph $H$, written $\omega(H)$, is  the maximum size of a clique of a graph $H$.

 	The strong product of two graphs $G$ and $H$, denoted by $G \boxtimes H$, is a graph whose vertex set is the Cartesian product $V(G) \times V(H)$, and two vertices
	 $(u, v)$ and $(u', v')$ are adjacent in $G \boxtimes H$ whenever 
$uu'\in E(G)$ and $v=v'$, or $vv'\in E(H)$ and $u=u'$, or $uu'\in E(G)$ and $vv'\in E(H)$.
	
	For a positive integer $k$, a $k$-coloring of $H$ is a partition of $V(H)$ into subsets $V_1,\ldots,V_k$ such that each $V_i$ is
	an independent set. The chromatic number of $H$, denoted by $\chi(H)$, is the smallest $k$ for which $H$ has a $k$-coloring. 
	The first nontrivial upper bound on chromatic number is shown by Brooks in 1941 \cite{Brooks}. It states that for any graph $G$  with 
	maximum degree $\Delta(G)$, clique number $\omega(G)$,
	and  chromatic number $\chi(G)$ we have  $\chi(G)\leq \max\{\Delta(G),\omega(G)\}$.
	One of the well-known conjectures in the area of graph colorings  is the Borodin and Kostochka conjecture. 
	As an extension of the Brooks theorem,
	in 1977 Borodin and Kostochka  conjectured that any graph $G$ with maximum degree $\Delta(G)\geq 9$ and clique number $\omega(G)\leq\Delta(G)-1$, 
	   satisfies 
	$\chi(G)\leq \Delta(G)-1$.
	Reed proved that  the conjecture holds whenever $ \Delta(G) \geq 10^{14} $ \cite{REED1999}.
	
	When $\chi(H)=\Delta(H)$ there are a series of interesting and useful results shows that $\omega(H)$
	must be close to $\Delta(H).$
	As the first result Borodin and Kostochka showed if $\chi(H) =\Delta(H) \geq 7$, then $H$ contains $K_{\frac{\Delta(H)+1}{2}}$ \cite{BORODIN1977247}. 
	Then,  Kostochka showed $H$ must  contain a copy of $K_{\Delta(H)-28}$\cite{kostochka1980degree}. 
	Mozhan proved that $\omega(H)\geq \Delta(H)-3$ when $\Delta(H)\geq 31$\cite{mozhan}. Finally, Cranston and Rabern strengthened Mozhan's 
	result   by weakening the condition to $\Delta(H)\geq 13$ \cite{cranston2015}.

	Let ${\PP}$ be  a graphical property.
	The conditional chromatic number $\chi(H,\PP)$ of $H$,  is the smallest  integer  $k$ for which there 
	is a decomposition of  $V(H)$  into  sets $V_1,\ldots,V_k$ such that for each $1\leq i\leq k$, $H[V_i]$ 
	satisfies  property $\PP$.
	Harary in 1985 introduced this extension of graph coloring ~\cite{MR778402}.
	Suppose that $\G$ is a family of graphs. For the property $\PP$ of being $\G$-free (i.e. a graph that has this property does not contain any subgraph isomorphic to a graph from $\G$),  we write $\chi_{\G}(H)$ instead of $\chi(H, \PP)$. Here, we say  a graph $H$ has a $\G$-free $k$-coloring if 
	$\chi_{\G}(H)\leq k$. For simplicity of notation, when $\G=\{G\}$, we write $\chi_G(H)$ instead of $\chi_{\G}(H)$. 
	An ordinary  $k$-coloring of $H$  can be viewed as a $K_2$-free $k$-coloring of a graph $H$.  
	Let the family $\C$ consist of all cycles.
	A $\C$-free $k$-coloring of a graph $H$ is a partition of $V(H)$ into $V_1,\ldots,V_k$ such that each $H[V_i]$ is 
	acyclic. The vertex arboricity of a graph $H$,  denoted by $a(H)$,
	is the minimum $k$ for which $H$ has a $\C$-free $k$-coloring. 
	 The vertex arboricity was introduced by  Chartrand, Kronk, and Wall in~\cite{Chartrand}.
	
	As a hardness result for $G$-free coloring,  Alchioptas showed that if $|V(G)|\geq 3$,
	verifying whether a graph admits a $2$-coloring such that each color class does not contain any copy of $G$ 
	is NP-complete while it is well-known that the property of being 2-colorable can be solved in linear time\cite{achlioptas1997complexity}.
	
	As a generalization of Brooks' Theorem, Catlin showed that every graph $H$ with $\Delta(H)\geq 3$
	without $K_{\Delta(H)+1}$ as a subgraph, has a $\Delta(H)$-coloring such that one of  the color classes is 
	a maximum independent set~\cite{Catlin}.  Also,  for the vertex arboricity of $H$ Catlin and Lai proved a result similar to  Catlin's result
	 by using $\lceil \frac{\Delta(H)}{2}\rceil$ colors~\cite{Catlin1}.
	In~\cite{rowshan2020catlin} the authors showed that a Catlin-type result  holds for $G$-free coloring. Let $G_1,\ldots, G_{k}$
	be  $k$ connected graphs with minimum degrees  $d_1,\ldots,d_k$, respectively, and let $H$ be a graph with $\Delta(H)=\sum_{i=1}^k d_i$. 
	Extending Catlin’s result, it was shown that there exists a 
	 a partition of  $V(H)$ into $V_1,\ldots, V_k$
	such that every $H[V_i]$ is $G_i$-free  
	except  either
	\begin{itemize}
	\item $k=1$ and $H$ is  isomorphic to  $G_1$, 
	\item each
	$G_i$ is isomorphic to $K_{d_i+1}$ and $H$ is not isomorphic to $K_{\Delta(H)+1}$, or
	\item	each $G_i$ is isomorphic to $K_{2}$ and $H$ is not an odd cycle.
	 	\end{itemize}
Furthermore, 
	one of  the $V_i$'s  can be chosen in a way that $H[V_i]$  is a maximum   $G_i$-free subset of $V(H)$.

\noindent One can ask the following natural question which is an analogue of Borodin and Kostochka's conjecture.
	
	\begin{question}~\label{genBK}
		Suppose that $H$ is a graph with $\Delta(H)\geq 2$ and clique number $\omega(H)$ where $\omega(H)\leq \Delta(H)-1$. 
		Assume  that  $p_1\geq p_2\geq\cdots\geq p_k\geq 2$  are  $k$ positive integers 
		and  $\sum_{i=1}^k p_i=\Delta(H)-1+k$. Is there a partition of  $V(H)$ into $V_1,V_2,\ldots, V_k$
		such that for each $1\leq i\leq k$, $H[V_i]$ is $K_{p_i}$-free?
	\end{question}
	In the case $\Delta(H)=2$,  we have  $k+1=\sum_{i=1}^k p_i\geq 2k$. Therefore, we obtain $k=1$ and  $p_1=2$. As a consequence the answer of 
	Question~\ref{genBK} is negative. 
So we may assume that $\Delta(H)\geq 3$. 
	Note that the answer to this question for the case $k=1$ is positive since in this case we have $p_1=\Delta(H)$  and then $\omega(H)\leq \Delta(H)-1=p_1-1$. So assume that $k\geq 2$.
	Also, note that  the above question the case when  $p_1=2$ (i.e. all $p_i$'s are equal to $2$) and $\Delta(H)\geq 9$ is the Borodin and Kostochka conjecture \cite{BORODIN1977247}. 	The case when  $p_1=2$ (i.e. all $p_i$'s are equal to $2$)  and $\Delta(H)\leq 8$, the answer of the above question is negative. 
	For example  for the $\Delta(H)=5$ case, take the strong product of $C_5$ and $K_2$,
	and  for the $\Delta(H)=8$ case, take the strong product of $C_5$ and $K_3$.
	 We intend to study  Question~\ref{genBK} when $p_1\geq 3$.

	 In the case $\Delta(H)=3$, since $p_i$'s are at least $2$ we must have either $k=1$ and $p_1=3$ or $k=2$ and $p_1=p_2=2$. 
	 In this case when $k=2$ and $p_1=p_2=2$,  every graph $H$
	  with maximum degree $\Delta(H)=3$ and chromatic number $\chi(H)=3$  is a negative answer to Question~\ref{genBK}.
	  
	 In the next remark, let us check what happens when $4\leq \Delta(H)\leq 6$ and $p_1\geq 3$ in Question~\ref{genBK}.
	 \begin{remark}\label{rmk1}
	Note that under the assumptions of Question~\ref{genBK}, if we have $4\leq \Delta(H)\leq 6$, $p_1\geq 3$, then  since $\Delta(H)-1+k=\sum_{i=1}^{k}p_i$  only the following cases can occur, and for some of these cases we can provide a positive or negative answer.
	\begin{itemize}
		\item[(a)] If $\Delta(H)=4$, $k=2$, $p_1=3$, and  $p_2=2$, then  the graph $H_1$  shown in Figure \ref{fi0} gives a negative answer to 
		Question~\ref{genBK}.
		\item[(b)]  If $\Delta(H)=5$, $k=2$, $p_1=4$, and  $p_2=2$, then   the strong product of $C_{2t+1}$ and $K_2$, where $t\geq 2$,  is a negative answer to Question~\ref{genBK}.
		\item[(c)]  If $\Delta(H)=5$,  $k=3$,  $p_1=3$, and $p_2=p_3=2$, then  the strong product of $C_{2t+1}$ and $K_2$,  where $t\geq 2$,  is a negative answer to 
		Question~\ref{genBK}.
		\item[(d)]  If $\Delta(H)=5$, $k=2$, and $p_1=p_2=3$,  we do not know the answer.
		\item[(e)]  If $\Delta(H)=6$, $k=4$, $p_1=3$, and $p_2=p_3=p_4=2$, we do not know the answer.
		\item[(f)] If $\Delta(H)=6$, $k=3$, $p_1=p_2=3$, and $p_3=2$, we do not know the answer.
		\item[(g)]  If $\Delta(H)=6$, $k=3$, $p_1=4$, and $p_2=p_3=2$, we do not know the answer.
		\item[(h)]  If $\Delta(H)=6$, $k=2$, $p_1=4$, and $p_2=3$, the answer is positive, it comes from Theorem~\ref{mainthm}.
		\item[(i)]  If $\Delta(H)=6$, $k=2$, $p_1=5$, and $p_2=2$, the answer is positive, it comes from Theorem~\ref{mainthm}.
		 
			\end{itemize}
	  		\end{remark}

	  	We will provide an explanation why the Parts (a), (b), and (c) do not have the required partitions at the beginning of the next section.

		
	In the next theorem we show that if  in Question~\ref{genBK} we have $p_1+p_2\geq 7$, the answer to this question  is positive.

It is worth noting that if  the Borodin and Kostochka conjecture holds and in Question~\ref{genBK}  we have $p_1\geq 3$ and $\Delta(H)\geq 9$, then the answer of Question~\ref{genBK} is 
positive.	It follows from the fact that  if  $W$  is a subset of  vertices of $H$ and there is a partition of $W$ into $p-1$ $K_2$-free subsets $W_1,\ldots,W_{p-1}$, then
	$H[W]$ is $K_p$-free.

	\begin{theorem}~\label{mainthm}
		Suppose that $H$ is a graph  with $\omega(H)\leq \Delta(H)-1$. Let $k\geq 2$ be a positive integer.
		Assume  that  $p_1\geq p_2\geq\cdots\geq p_k\geq 2$  are  $k$ positive integers and  $\sum_{i=1}^k p_i=\Delta(H)-1+k$. If $p_1+p_2\geq 7$, then
		there exists a partition of $V(H)$ into $V_1,V_2,\ldots, V_k$
		such that for each $1\leq i\leq k$, $H[V_i]$ is $K_{p_i}$-free.
	\end{theorem}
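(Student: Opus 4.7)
The plan is to argue by contradiction: let $H$ be a counterexample of minimum order, and build the desired partition by first invoking the Catlin-type result of~\cite{rowshan2020catlin} with a shifted parameter vector and then upgrading the output through local recolourings.

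For the starting partition I would apply the Catlin-type theorem with $G_1 = K_{p_1+1}$ and $G_i = K_{p_i}$ for $i \geq 2$. The corresponding minimum degrees are $d_1 = p_1$ and $d_i = p_i - 1$ for $i \geq 2$, hence $\sum_{i=1}^{k} d_i = \sum_{i=1}^{k} p_i - (k-1) = \Delta(H)$, matching the hypothesis of that result. None of its exceptional cases applies: $k=1$ forces $p_1 = \Delta(H)$ and degenerates to the trivial statement that $H$ is $K_{\Delta(H)}$-free, while $H \not\cong K_{\Delta(H)+1}$ because $\omega(H) \leq \Delta(H) - 1$. Using the refinement that one block can be chosen maximum $G_i$-free, I obtain a partition $V_1, \ldots, V_k$ in which $H[V_i]$ is $K_{p_i}$-free for $i \geq 2$, $H[V_1]$ is $K_{p_1+1}$-free, and $V_1$ is a maximum $K_{p_1+1}$-free subset of $V(H)$.

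If $H[V_1]$ is already $K_{p_1}$-free, the theorem is proved. Otherwise $V_1$ contains some copy $K$ of $K_{p_1}$, and among all valid partitions with the above properties I would additionally pick one that minimises the number of $K_{p_1}$-subgraphs inside $V_1$. For any $u \in K$, if $u$ could be moved to some $V_j$ with $j \geq 2$ while keeping $V_j \cup \{u\}$ free of $K_{p_j}$, the first block would strictly lose the clique $K$ (and stay $K_{p_1+1}$-free), contradicting minimality. Hence for every $u \in K$ and every $j \geq 2$ the set $N(u) \cap V_j$ contains a copy of $K_{p_j-1}$. Together with $K \setminus \{u\}$ inside $N(u) \cap V_1$, this yields pairwise disjoint cliques $C_1(u), \ldots, C_k(u) \subseteq N(u)$ of sizes $p_1 - 1, p_2 - 1, \ldots, p_k - 1$, consuming $\sum_{i}(p_i - 1) = \Delta(H) - 1$ of the at most $\Delta(H)$ neighbours of $u$.

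The principal step — and the one I expect to carry the brunt of the difficulty — is to promote this almost-complete configuration to an actual copy of $K_{\Delta(H)}$ in $H$, contradicting $\omega(H) \leq \Delta(H) - 1$. Equivalently, I must show that $C_1(u), \ldots, C_k(u)$ are pairwise completely joined. For each hypothetical non-edge between $C_i(u)$ and $C_j(u)$ I would attempt a secondary Kempe-style swap: move a vertex of $C_j(u)$ into $V_i$ (or vice versa) so that the $K_{p_i - 1}$ obstructing the relocation of $u$ is destroyed, allowing $u$ itself to be moved and contradicting the earlier minimal choice. Obstructions to such secondary swaps produce new stuck vertices whose neighbourhoods must be analysed recursively; the maximality of $V_1$ among $K_{p_1+1}$-free subsets will be used repeatedly to block external vertices from participating, and the hypothesis $p_1 \geq 4$ (so that $|C_1(u)| \geq 3$) together with $\Delta(H) \geq 6$ is where the descent must be controlled to avoid degenerate small configurations. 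Cleanly orchestrating this nested swap argument is the technical core of the proof.
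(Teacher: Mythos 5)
Your proposal diverges from the paper's argument and, as written, has a genuine gap at exactly the point you identify as the ``technical core.'' The entire content of the theorem is concentrated in the promotion step: showing that the cliques $C_1(u),\ldots,C_k(u)$ are pairwise completely joined, so that $\{u\}\cup\bigcup_i C_i(u)$ is a $K_{\Delta(H)}$. You do not carry this out; you only describe an intention to perform ``secondary Kempe-style swaps'' whose obstructions ``must be analysed recursively.'' There is good reason to believe this cannot be dispatched in a line or two: arguments of precisely this shape (Mozhan-type partitions with a potential function counting forbidden subgraphs inside parts) are what underlie the results of Mozhan~\cite{mozhan} and Cranston--Rabern~\cite{cranston2015}, and even for the classical case they yield only $\omega(H)\geq\Delta(H)-c$ for a constant $c>0$ and require $\Delta(H)$ large, not the full $\omega(H)\geq\Delta(H)$ you need here. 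Indeed, the present paper's proof of Theorem~\ref{mth1} executes exactly this kind of nested-swap analysis, but only for $k=2$ and only under the extra hypothesis $\omega(H)=p$, and it occupies several pages of delicate claims. There is also an internal inconsistency in your setup: you insist that $V_1$ be a \emph{maximum} $K_{p_1+1}$-free subset and simultaneously argue that moving $u$ out of $V_1$ would contradict the minimality of the number of $K_{p_1}$'s in $V_1$; but moving $u$ out of $V_1$ destroys the maximality of $V_1$, so the resulting partition is not in your optimization class and no contradiction arises. (This particular defect is repairable by dropping the maximality requirement, but then you lose the tool you say you will ``use repeatedly.'')

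For contrast, the paper avoids all of this. It first reduces to $k=2$ by a simple iteration (choosing $V_2$ maximal $K_{p_k}$-free forces every vertex of $V_1$ to have at least $p_k-1$ neighbours in $V_2$, so $\Delta(H[V_1])$ drops by $p_k-1$), and then proves the $k=2$ case by induction on $\Delta(H)$: at each step one removes a maximal independent set $I$, chosen via Theorem~\ref{M.th2} (Christofides--Edwards--King~\cite{christofides2013note}) to hit every maximum clique when $\omega(H)=\Delta(H)-1$, so that $H\setminus I$ has both maximum degree and clique number reduced; the set $I$ is absorbed into the class $V_2$, lowering $q$ by one. The base case $\Delta(H)=6$ is settled directly by one or two applications of the same hitting-set theorem. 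If you want to pursue your route, you would need to either prove the pairwise-joinedness claim in full (which is a substantial project in its own right and may simply be false at the generality you need), or replace it with the paper's induction.
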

	Note that  under assumptions of  Theorem~\ref{mainthm} we conclude that $\Delta(H)\geq 6$. For see this,  since $p_1+p_2\geq 7$,  all $p_i$'s are  least $2$,  and  $\sum_{i=1}^k p_i=\Delta(H)-1+k$,
	we have $7+2(k-2)\leq \Delta(H)-1+k$ and consequently $\Delta(H)\geq k+4\geq 6$.
	
	As a direct consequence of Theorem~\ref{mainthm}, one can see that if   $H$ is a graph with $\omega(H)\leq \Delta(H)-1$  and $p\geq 4$, then
	$\chi_{_{K_p}}(H)\leq \lceil{\Delta(H)-1\over p-1}\rceil$.
	
	
	
	
	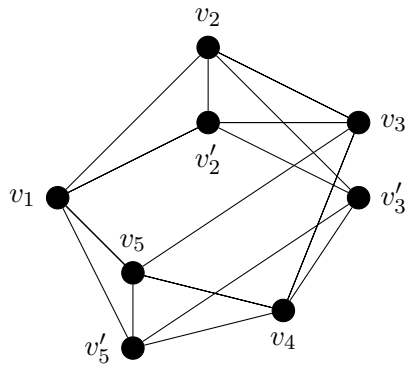
\begin{figure}[ht]
		\begin{tabular}{ccc}
			\begin{tikzpicture}
				\node [draw, circle, fill=black, inner sep=3pt, label=left: $v_1$ ] (y1) at (0,2) {};
				\node [draw, circle, fill=black, inner sep=3pt, label=above: $v_2$] (y2) at (2,4) {};
				\node [draw, circle, fill=black, inner sep=3pt, label=below:$v'_2$] (y3) at (2,3) {};
				\node [draw, circle, fill=black, inner sep=3pt, label=right:$v_3$] (y4) at (4,3) {};
				\node [draw, circle, fill=black, inner sep=3pt, label=right:$v'_3$] (y5) at (4,2) {};
				\node [draw, circle, fill=black, inner sep=3pt, label=above:$v_5$] (y6) at (1,1) {};
				\node [draw, circle, fill=black, inner sep=3pt, label=below:$v_4$] (y7) at (3,.5) {};
				\node [draw, circle, fill=black, inner sep=3pt, label=left:$v'_5$] (y8) at (1,0) {};
				\
				
				\draw (y1)--(y3)--(y1)--(y2);
				\draw (y2)--(y3);
				\draw (y2)--(y4)--(y2)--(y5);
				\draw (y4)--(y6);
				\draw (y5)--(y8);
				\draw (y3)--(y4)--(y3)--(y5);
				\draw (y1)--(y6)--(y1)--(y8);
				\draw (y6)--(y8);
				\draw (y7)--(y6)--(y7)--(y8);
				\draw (y7)--(y4)--(y7)--(y5);

			\end{tikzpicture}
		\end{tabular}\\
		\caption{The graph $H_1$, a negative answer to Question~\ref{genBK} for $\Delta=4$, $k=2$, $p_1=3$, and $p_2=2$.}
		\label{fi0}
	\end{figure}
	
	As we mentioned  Catlin proved that 
	if $H$ is a graph
	with $\Delta(H)\geq 3$ and 
	without $K_{\Delta(H)+1}$ as a subgraph, then $H$ has a proper $\Delta(H)$-coloring for which one of  the color classes 
	is a maximum independent set of $H$~\cite{Catlin}. One can ask for a given graph $H$ with the chromatic number at most $\Delta(H)-1$, 
	is there any  $(\Delta(H)-1)$-coloring of $H$ such that one of  the color classes  is  a maximum independent set of $H$? 
	It is easy to see that the answer is negative.
	For example for $n\geq 2$ consider the complete $K_n$ and for each vertex of $K_n$, say $v_i$, add two new vertices $w_i, w'_i$ and join them to $v_i$.
	The resulting graph $H_0$ has  $\Delta(H_0)=n+1$, $\chi(H_0)=n$, and $\alpha(G)=2n$. 
	The graph $H_0$ has a unique independent set of cardinality $2n$,
	which can not be a color class for any  $(\Delta(H_0)-1)$-coloring of $H_0$.

		There is another question that is closely related  to Catlin's result is the following. 

	

	\begin{question}\label{Catlinqu}
		Let $H$ be a graph. 
		Assume  that  $p_1\geq p_2\geq\cdots\geq p_k\geq 2$  are  $k$ positive integers and  $\sum_{i=1}^k p_i=\Delta(H)-1+k$. 
		What is the  minimum number $f(\Delta, p_1)$ such that the following is true?
		If $H$ is a graph with  $\omega(H)\leq \Delta(H)-f(\Delta,p_1) $, then 
		there exists a partition of vertices of $H$ into $V_1,V_2,\ldots, V_k$
		such that $H[V_i]$ is $K_{p_i}$-free for each $1\leq i\leq k$ and $H[V_1]$ is a maximum $K_{p_1}$-free subgraph.
	\end{question}
	Note that in Question~\ref{Catlinqu}, if $H$ contains a copy of $K_{\Delta(H)}$,  such a partition does not exist. Therefore, 
	we must have $\omega(H)\leq \Delta(H)-1$ and thus, $ f(\Delta, p_1)\geq 1$.
	If $\omega(H)\leq p_1-1$, $H$  contain no copy of $K_{p_1}$. Therefore, 
	 $1\leq f(\Delta, p_1)\leq \Delta(H)-(p_1-1)$.
	 Here, we prove that if $k=2$ and $p_1\geq 4$ and $p_2\geq 3$, then  $f(\Delta, p_1)\leq \Delta(H)-p_1$. 
		
	\begin{theorem}\label{mth1}
		Assume that $H$  is a  graph with  $\Delta(H)\geq 6$ and  clique number $\omega(H)$ where $4\leq \omega(H)\leq \Delta(H)-2$.
		Denote  $\omega(H)=p$ and  $\Delta(H)+1-p=q$.
				Then there exists  $V_1\subseteq V(H)$ such that $V_1$ is a maximum $K_p$-free subset of $H$,
		and $H[V\setminus V_1] $ is $K_q$-free.	
	\end{theorem}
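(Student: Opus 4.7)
\medskip

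\noindent\textbf{Proof plan.} The approach is an extremal-with-swap argument. Among all maximum $K_p$-free subsets of $V(H)$, choose $V_1$ so as to minimize the number of copies of $K_q$ in $V_2:=V(H)\setminus V_1$; the aim is to show this minimum is zero. Suppose not, and let $K=\{u_1,\ldots,u_q\}$ be a $K_q$ in $V_2$; I will produce a modification of $V_1$ that strictly decreases this count, contradicting the choice.

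First I would collect the local structure around $K$. By maximality of $V_1$, each $u_i$ has a critical $(p-1)$-clique $C_i\subseteq V_1\cap N(u_i)$ with $C_i\cup\{u_i\}\cong K_p$. Combined with the $q-1$ neighbors of $u_i$ inside $K$, this forces $\deg_H(u_i)\ge (p-1)+(q-1)=\Delta(H)-1$, hence $|N(u_i)\cap V_1|\in\{p-1,p\}$ and $\deg_{H[V_2]}(u_i)\in\{q-1,q\}$. The hypothesis $\omega(H)=p$ then enters in two ways: for $i\neq j$, $u_j$ cannot be adjacent to every vertex of $C_i$ (else $\{u_i,u_j\}\cup C_i$ is a $K_{p+1}$); and the $C_i$'s cannot all coincide, because otherwise $K\cup C_1$ would induce $K_{\Delta(H)}$, contradicting $\omega(H)=p<\Delta(H)$.

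Next comes the swap. For a chosen $u_i\in K$ and $c\in V_1$, set $V_1':=(V_1\setminus\{c\})\cup\{u_i\}$; this is $K_p$-free iff $c$ lies in every $(p-1)$-clique contained in $N(u_i)\cap V_1$. When $|N(u_i)\cap V_1|=p-1$, the only such clique is $C_i$, so any $c\in C_i$ is admissible. When $|N(u_i)\cap V_1|=p$, the set $N(u_i)\cap V_1$ must be $K_p$-free (otherwise $\{u_i\}\cup(N(u_i)\cap V_1)$ is a $K_{p+1}$), so a short case analysis on its non-edges---two disjoint non-edges would leave no $(p-1)$-clique at all, contradicting existence of $C_i$---shows that all its non-edges share a common vertex, whence all $(p-1)$-cliques in $N(u_i)\cap V_1$ share a common vertex which I take as $c$. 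In either case $V_1'$ is again maximum $K_p$-free.

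The main obstacle is to choose $(u_i,c)$ so that the number of $K_q$'s in $V_2':=(V_2\setminus\{u_i\})\cup\{c\}$ is strictly smaller than in $V_2$: the copies gained by reinserting $c$ into $V_2$ must not outnumber those lost by removing $u_i$. The plan is to exploit $\deg_{V_1}(c)\ge p-2$ (from $c\in C_i$), which gives $\deg_{H[V_2']}(c)\le q$, together with the fact that $C_i\not\subseteq N(u_j)$ for $j\neq i$: this allows me to pick $c\in C_i$ with $c\not\sim u_j$, which in particular prevents the would-be $K_q$ on $(K\setminus\{u_i\})\cup\{c\}$ from materialising. A careful accounting, split by the Cases $|N(u_i)\cap V_1|\in\{p-1,p\}$ and $\deg_{H[V_2]}(u_i)\in\{q-1,q\}$, then shows the $K_q$-count strictly decreases and contradicts the minimal choice of $V_1$. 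The tightest and most delicate instance is $p=q=3$ with $\Delta(H)=5$, where every inequality above becomes an equality and the case analysis is most constrained.
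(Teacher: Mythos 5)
Your setup is sound and matches the paper's opening moves: the extremal choice of a maximum $K_p$-free set minimizing the number of $K_q$'s in the complement, the observation that each $u_i$ has at least $p-1$ neighbours in $V_1$ and hence degree at least $\Delta(H)-1$, and the identification of the vertices lying in \emph{every} $(p-1)$-clique of $N(u_i)\cap V_1$ as the admissible swap partners (this is exactly the paper's set $A_{v_0}$, with $|A_{v_0}|\ge p-1$). The gap is in the last step. You assert that a well-chosen single swap $(u_i,c)$ strictly decreases the number of $K_q$'s in the complement, but choosing $c$ non-adjacent to some $u_j$ only kills the one candidate clique $(K\setminus\{u_i\})\cup\{c\}$; the vertex $c$ still has up to $q$ neighbours in the new complement and can sit in several copies of $K_q$ having nothing to do with $K$. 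In fact the minimality of your initial choice already tells you the count \emph{cannot} strictly decrease, and the paper turns this around: after the swap the displaced vertex must itself lie in a new copy of $K_q$, which is precisely why no single swap closes the argument. The "careful accounting" you defer is the entire difficulty of the theorem.

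The paper's proof handles this by iterating the swap into a chain $S_0,S_1,\dots,S_\ell$ (alternately exchanging a vertex $v_i$ of a $K_q$ in the complement with a vertex $y_i$ of the critical set $A_{v_i}$), showing the sets $A_{v_i}$ must be pairwise disjoint until some step $\ell$, and then extracting a $K_{p+1}$ from the first collision, contradicting $\omega(H)=p$. To make the chain work it also needs a tie-breaking criterion you omit entirely: among maximum $K_p$-free sets one first minimizes the number of copies of $K_{p-1}$ inside $V_1$ (and only then minimizes $K_q$'s in the complement). This secondary minimization is what guarantees each $S_{i+1}$ stays in the extremal family and is used directly to derive contradictions at several points (e.g.\ by exhibiting a set $S^*$ with fewer $K_{p-1}$'s). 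Without the iteration and without this extra minimization, your single-swap plan does not go through.
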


		As a direct  consequence of   Theorems~\ref{mainthm} and \ref{Catlinqu} we can show that  if $k\geq 3$,
	$p_1\geq 4$, and $p_2+p_3\geq 7$, then  $f(\Delta, p_1)\leq \Delta(H)-p_1$. We defer the proof of the following corollary to the end of  Section~\ref{s2}.

\begin{corollary}\label{cor1}
Let $H$ be a graph. 
		Assume  that  $k\geq 3$ and  $p_1\geq p_2\geq\cdots\geq p_k\geq 2$  are  $k$ positive integers for which
		$p_1\geq 4$ and  $p_2+p_3\geq 7$, and  $\sum_{i=1}^k p_i=\Delta(H)-1+k$. 
		If $\omega(H)=p_1$, then there exists a partition of vertices of $H$ into $V_1,V_2,\ldots, V_k$
		such that $H[V_i]$ is $K_{p_i}$-free for each $1\leq i\leq k$. Moreover,  $H[V_1]$ is a maximum $K_{p_1}$-free subgraph.
\end{corollary}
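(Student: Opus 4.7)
The plan is to combine Theorem~\ref{mth1} (to extract a maximum $K_{p_1}$-free subset $V_1$) with Theorem~\ref{mainthm} (to partition the remainder $V(H)\setminus V_1$).

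Setting $q := \Delta(H) + 1 - p_1$, I first verify $q \geq 3$. From $p_2 \geq 4$, $p_j \geq 2$ for $j \geq 3$, $k \geq 3$, and $\sum_{i} p_i = \Delta(H) + k - 1$, one gets $p_1 \leq \Delta(H) - k - 1 \leq \Delta(H) - 4$, and hence $q \geq 5$. Together with $\omega(H) = p_1 \geq 3$ and $\Delta(H) \geq 9 \geq 5$, Theorem~\ref{mth1} (applied componentwise if $H$ is disconnected) yields a maximum $K_{p_1}$-free subset $V_1 \subseteq V(H)$ such that $H' := H[V(H) \setminus V_1]$ is $K_q$-free.

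Next, I would partition $V(H')$ into $V_2, \ldots, V_k$ by invoking Theorem~\ref{mainthm} with parameters $p_2, \ldots, p_k$. The key observation is that every $v \in V(H')$ has at least $p_1 - 1$ neighbours in $V_1$ (otherwise $V_1 \cup \{v\}$ would still be $K_{p_1}$-free, contradicting the maximality of $V_1$), so $\deg_{H'}(v) \leq \Delta(H) - (p_1 - 1) = q$ and thus $\Delta(H') \leq q$. Because the sum hypothesis of Theorem~\ref{mainthm} requires the host graph to have maximum degree exactly $q$ (so that $\sum_{i=2}^{k} p_i = q + k - 2$ matches $\Delta(\cdot) - 1 + (k-1)$), I would apply it not to $H'$ itself but to the auxiliary graph $H^+ := H' \sqcup K_{1,q}$. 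Then $\Delta(H^+) = q$, $\omega(H^+) \leq \max(\omega(H'), 2) \leq q - 1 = \Delta(H^+) - 1$, and the parameters $p_2 \geq \cdots \geq p_k \geq 2$ with $p_2 \geq 4$ verify the remaining hypotheses. Provided $q \geq 6$, Theorem~\ref{mainthm} applies to $H^+$; restricting the resulting partition to $V(H') \subseteq V(H^+)$ delivers $V_2, \ldots, V_k$ with each $H'[V_i]$ being $K_{p_i}$-free, and together with $V_1$ this yields the partition of $V(H)$ claimed.

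The main obstacle I anticipate is the boundary case $q = 5$, which (given $k \geq 3$ and $p_2 \geq 4$) can occur only when $k = 3$ and $(p_1, p_2, p_3) = (\Delta(H) - 4,\, 4,\, 2)$; here $\Delta(H^+) = 5 < 6$ so Theorem~\ref{mainthm} does not apply. In that regime the remaining task is to partition the $K_5$-free graph $H'$ (of maximum degree at most $5$) into a $K_4$-free part $V_2$ and an independent part $V_3$, equivalently to find an independent set in $H'$ that meets every $K_4$. This low-degree exceptional case would have to be handled by a separate structural argument --- for instance via an extension of Catlin-type results to $K_5$-free graphs of maximum degree five --- rather than by invoking Theorem~\ref{mainthm}.
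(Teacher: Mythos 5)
Your route is the one the paper intends (the paper gives no written proof of Corollary~\ref{cor1}; it only asserts that the statement follows from Theorems~\ref{mth1} and~\ref{mainthm} by first extracting a maximum $K_{p_1}$-free set $V_1$ with $K_q$-free complement, $q=\Delta(H)+1-p_1$, and then partitioning the complement). Your bookkeeping for the generic case is correct and in one respect more careful than the paper: the bound $p_1\le\Delta(H)-k-1$ giving $q\ge 5$, the degree bound $\Delta(H')\le q$ from maximality of $V_1$, and the $K_{1,q}$-padding to restore the exact equality $\sum_{i=2}^k p_i=\Delta(\cdot)-1+(k-1)$ demanded by Theorem~\ref{mainthm} are all sound. (A minor point you wave at but do not resolve: Theorem~\ref{mth1} is stated for connected graphs with $p+q=\Delta+1$ computed from the \emph{global} maximum degree, so the componentwise application needs a sentence for components $C$ with $\Delta(C)<\Delta(H)$; this is routine, since either $\omega(C)<p_1$, or $\Delta(C)\le p_1+1$ and the complement of a maximal $K_{p_1}$-free set in $C$ has maximum degree at most $2$, or one applies Theorem~\ref{mth1} with $q'=\Delta(C)+1-p_1\le q$ and uses that $K_{q'}$-free implies $K_q$-free.)

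The case you flag, $q=5$, i.e.\ $k=3$ and $(p_1,p_2,p_3)=(\Delta(H)-4,4,2)$, is a genuine gap, not a technicality: you are then asked to split a $K_5$-free graph of maximum degree at most $5$ into a $K_4$-free part and an independent part, and the paper itself exhibits $C_5\boxtimes K_2$ as a graph with $\Delta=5$ and $\omega=4$ admitting no such partition (an independent set there has at most two vertices and misses one of the five $K_4$'s). So no generic degree/clique-number argument, and in particular no appeal to Theorem~\ref{mainthm} or to Theorem~\ref{M.th2} alone, can close this case; one would have to show that the particular complement $\overline{V_1}$ arising from an extremal set $S\in\F$ in the proof of Theorem~\ref{mth1} cannot contain a component isomorphic to $C_{2t+1}\boxtimes K_2$, or else exploit the freedom in choosing $V_1$. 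Your proof is therefore incomplete exactly where the paper's one-line justification is also silent; you have correctly located the weak point of the ``direct consequence'' rather than made an error of your own.
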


	\section{Proof of the main results}\label{s2}
	In this part we shall prove  Parts (a), (b), and (c) of  Remark~\ref{rmk1}, Theorems \ref{mainthm} and   \ref{mth1}, and Corollary~\ref{cor1}.

\begin{proof}[\bf Proof of  Parts (a), (b), and (c) of Remark~\ref{rmk1}] 

\item To prove Part~(a), consider the graph $H_1$ in Figure~\ref{fi0}. Our aim is  that to prove the vertex set of  $H_1$ cannot be partitioned into a $K_2$-free subset
  and a $K_3$-free subset. By way of contradiction, suppose that we have an independent set $I$ such that $H\setminus I$ is $K_3$-free.
  Consider two vertex disjoint  copies of $K_3$ in $H_1$, namely $T_1\isdef H_1[\{v_1,v_2,v'_2\}]$ and $T_2\isdef H_1[\{v_4,v_5,v'_5\}]$.
  Then, the  independent set $I$ must contain one vertex of $T_1$ and one vertex of $T_2$. If we have $v_1\in I\cap T_1$, then from $T_2$ we must have
  $v_4\in I$ since $v_1$ is adjacent to $v_5$ and $v'_5$. Then, $H_1[v_2,v'_2,v_3]$ is a copy of $K_3$ in $H\setminus I$, a contradiction.
  Therefore, we may assume that $v_1\notin I$. By symmetry assume that from $T_1$ the vertex $v_2\in I\cap T_1$. 
  Since $T_3\isdef H_1[\{v_1,v_5,v'_5\}]$ is a copy of $K_3$, again by symmetry we may assume that $v_5$ from $T_3$ must be in $I$.
  Then, $H_1[v'_3,v_4,v'_5]$ is a copy of $K_3$ in $H\setminus I$, a contradiction.

  \item To prove Parts~(b) and (c), for a fixed $t\geq 2$ consider the strong product of $C_{2t+1}$ and $K_2$, which we will denote by $H$ here.
  To prove (b),
   our aim is that that the vertex set of  $H$ 
  cannot be partitioned into a $K_2$-free subset
  and a $K_4$-free subset. By way of contradiction, suppose that we have an independent set $I$ such that $H\setminus I$ is $K_4$-free.
   Since $\alpha(H)=t$, we have $|I|\leq t$. 
  The graph  $H$ has $2t+1$ distinct copy of $K_4$ and each vertex of $H$ lies in exactly two copies of $K_4$.
  Therefore, $I$ intersects at most $2t$ copies of $K_4$. Then $H\setminus I$ contains a copy of $K_4$, a contradiction.
   
\item  To prove Part~(c), our aim is that  the vertex set of  $H$ 
  cannot be partitioned into two $K_2$-free subsets 
  and a $K_3$-free subset.  By way of contradiction, suppose that  we have two disjoint independent sets $I$ and $I'$ in $H$ such that 
  $H\setminus (I\cup I')$ is $K_3$-free.
   Since $\alpha(H)=t$, we have $|I|\leq t$ and $|I'|\leq t$. 
 By using a similar argument to that in Part~(b), we have $H\setminus I$ contains a copy of $K_4$. The independent set $I'$ has at most one vertex in this copy of $K_4$ in $H\setminus I$.
 Hence, $H\setminus(I\cup I')$ has a copy of $K_3$, a contradiction.
   \end{proof}


	To prove Theorem~\ref{mainthm} we need  the following useful and  interesting  result due to Christofides, Edwards, 
	and King~\cite{christofides2013note}, which is a generalization of  some earlier results  due to
	Koatochka~\cite{kostochka1980degree}, Rabern~\cite{rabern2011hitting}, and King~\cite{king2011hitting}.
	It was shown that graphs with clique number sufficiently close to their maximum degree have an independent set hitting every maximum clique.
	\begin{theorem}{\rm \cite{christofides2013note}\label{M.th2}}
		Any connected graph $H$ satisfying $\omega(H)\geq \frac{2(\Delta(H)+1)}{3}$, contains an independent set  intersecting every maximum clique unless it is the strong product of an odd cycle with length at least $5$ and the complete graph $K_{\omega(H)/ 2}$.
	\end{theorem}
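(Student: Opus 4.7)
The plan is to reduce the problem to an application of Haxell's independent-transversal theorem after a careful structural analysis of how the maximum cliques of $H$ can overlap when $\omega(H)$ is close to $\Delta(H)$.

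First, I would establish a clique-intersection lemma: if $K, K'$ are two distinct maximum cliques sharing at least one common vertex $v$, then $v$ is adjacent to every other vertex of $K \cup K'$, so $\deg(v) \geq 2\omega(H) - |K \cap K'| - 1$. Combining this with $\deg(v) \leq \Delta(H)$ and the hypothesis $\omega(H) \geq \tfrac{2(\Delta(H)+1)}{3}$ forces $|K \cap K'| \geq \omega(H)/2$, so any two maximum cliques are either vertex-disjoint or share at least half of their vertices.

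Next, I would introduce the equivalence relation $u \sim v$ on the union of all maximum cliques, declaring $u \sim v$ when $u$ and $v$ lie in exactly the same collection of maximum cliques; call the resulting classes \emph{atoms}. Each atom is itself a clique, each maximum clique is a disjoint union of atoms, and the intersection lemma forces each maximum clique to split into at most two atoms of size at least $\omega(H)/2$ each. Hitting every maximum clique with an independent set is then implied by the existence of an independent transversal of the atom partition: the atoms serve as the "parts", each of size at least $\omega(H)/2$, and the external degrees out of an atom are controlled by the overlap structure dictated by the clique-intersection lemma. A Haxell-type independent-transversal argument, used at the sharpness point $\omega(H) \geq \tfrac{2(\Delta(H)+1)}{3}$, then produces the required independent set in every non-extremal case.

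Finally, the extremal analysis characterises the unique obstruction: the atom adjacency graph must be a cycle whose atoms all have size exactly $\omega(H)/2$ and where consecutive atoms are completely joined and together form a maximum clique. A parity argument forces the cycle length to be odd and at least $5$, and unpacking the definitions shows that $H$ is precisely the strong product of $C_{2k+1}$ with $K_{\omega(H)/2}$. The main obstacle I anticipate is the extremal step itself: one must verify that equality in Haxell's theorem forces rigid complete joins between consecutive atoms and rules out any additional inter-atom edges or external adjacencies, after which the parity constraint singles out the odd-cycle strong-product family as the sole exception.
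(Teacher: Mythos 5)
First, a remark on scope: the paper does not prove this statement at all --- it is quoted verbatim from Christofides, Edwards, and King \cite{christofides2013note} and used as a black box --- so the only meaningful comparison is with the proof in that reference, which does indeed proceed via clique intersections and a lopsided independent-transversal theorem. Your opening lemma is correct: if two maximum cliques $K,K'$ share a vertex $v$, then $\deg(v)\ge 2\omega(H)-|K\cap K'|-1$, and combining this with $\Delta(H)\le \tfrac{3\omega(H)}{2}-1$ gives $|K\cap K'|\ge \omega(H)/2$.

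However, the reduction to an independent transversal of the \emph{atom} partition is wrong, and this breaks the argument. If some maximum clique $K$ contains two distinct atoms $A_1,A_2$, then any transversal of the atoms must pick $a_1\in A_1$ and $a_2\in A_2$, and these two vertices are adjacent because both lie in $K$; hence an independent transversal of the atoms can exist only when every maximum clique is a single atom, i.e.\ when the maximum cliques are pairwise disjoint. In the typical non-extremal situation (say two maximum cliques $A\cup B$ and $B\cup C$ with $|A|=|B|=|C|=\omega/2$) your scheme produces nothing, even though a single vertex of $B$ already hits both cliques. The correct parts, as in \cite{christofides2013note} and in King's earlier work, are not the atoms but the common intersections $Y_i=\bigcap\mathcal{K}_i$ of the connected components $\mathcal{K}_i$ of the intersection graph of maximum cliques: one part per component, each of size at least $\omega/2$ by Hajnal's clique collection lemma ($|\bigcup\mathcal{K}|+|\bigcap\mathcal{K}|\ge 2\omega$ for a connected collection). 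That lemma is also what your sketch is silently missing elsewhere: the pairwise intersection bound alone does not yield your claim that each maximum clique splits into at most two atoms of size at least $\omega/2$ --- three pairwise overlapping maximum cliques can shatter one of them into four atoms, some of size $1$. Finally, at the exact threshold $\omega=\tfrac{2(\Delta+1)}{3}$ plain Haxell-type bounds are tight, so the extremal characterization requires the known description of the tight instances for independent transversals; that closing step of your outline is plausible in spirit but currently rests on the flawed reduction.
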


	\begin{proof}[\bf Proof of  Theorem \ref{mainthm}]
		First note that  we only need to prove the statement for $k=2$. Assume that the statement  holds for $k=2$, i.e. 
		if  $\omega(H)\leq \Delta(H)-1$, $p+q=\Delta(H)+1$,  and $p+q\geq 7$, 
		then there exists a partition of $V(H)$ into  $V_1, V_2$ so that $H[V_1]$ is $K_p$-free and  $H[ V_2] $ is 
		$K_q$-free. 
		
		To prove the statement for $k\geq 3$, we
		set $p=\sum_{i=1}^{k-1}p_i-(k-2)$ and $q=p_k$. Note that $p+q=\Delta(H)+1$ and also   $p+q\geq 7$  
		since $p+q$ is equal to $\sum_{i=1}^{k}p_i-(k-2)\geq p_1+p_2+(k-2)$.
		Since the statement is true for $k=2$, we can obtain 
		a partition of $V(H)$ into  $V_1$ and $V_2$ such that $H[V_1]$ is $K_p$-free and  $H[ V_2] $ is $K_q$-free. Also, assume that 
		$V_2$ is a maximal $K_q$-free subset of $H$. 	Therefore, each vertex in $V_1$ has at least $q-1$ neighbors  in $V_2$,
				 which implies that the maximum degree of $H[V_1]$ is at most $\Delta(H)-(q-1)=p$.
			
			If the maximum degree of $H[V_1]$ is less than $p$, let $v\in V_1$ be a vertex with maximum degree in $H[V_1]$ such that its degree is equal to $p'<p$. We add $p-p'$ new vertices to $H[V_1]$ and join all of them to $v$, forming a new graph $H'$. The graph $H'$ has maximum degree $p$ and is $K_p$-free.	 
				 Suppose that there exists a partition of $V(H')$   into  $W_1,\cdots, W_{k-1}$ such that for each $1\leq i\leq k-1$, $H'[W_i]$ is $K_{p_i}$-free,
				  then $W_1\cap V_{1},\ldots, W_{k-1}\cap V_1, V_2$ is the desired partition of $V(H)$.
				 Therefore, we may assume that $H[V_{1}]$  is a graph with maximum degree $p$. 
				 We also have $\omega( H[V_{1}])\leq p-1$, $p_1+p_2\geq 7$, $\sum_{i=1}^{k-1}p_i=p-1+(k-1)$.
				  We iterate this procedure until we  obtain the desired partition. 
				  
		To prove the statement  of Theorem~\ref{mainthm} for $k=2$,    we use induction on $\Delta(H)$. 
		
		 We may assume that $H$ is a connected graph; otherwise if $H$ has $\ell\ge 2$ connected components,
		say $H_1,\ldots, H_{\ell}$,  we prove the statement for each of these connected components. Then, for each connected component  $H_i$, 
		there exists a partition of $V(H_i)$ into  $V_{i,1}$ and $V_{i,2}$  such that $H[V_{i,1}]$ is $K_p$-free and  $H[ V_{i,2}] $ is $K_q$-free.
		 Now, define $V_1=\cup_{i=1}^{\ell}V_{i,1}$ and
		$V_2=\cup_{i=1}^{\ell}V_{i,2}$. Therefore, we have $H[V_1]$ is $K_p$-free and  $H[ V_2] $ is $K_q$-free.		
		
		For the base case of the induction, let   $\Delta(H)=6$. Since $p+q=\Delta(H)+1=7$ and $p\geq q\geq 2$, we must have either $(p,q)=(4,3)$ or
		$(p,q)=(5,2)$.  As $H$ is connected graph with  $\Delta(H)=6$, $H$ is not isomorphic to
		the strong product of an odd cycle and a complete graph. 
		
		First assume that $(p,q)=(5,2)$. If $\omega(H)\leq 4$, there is nothing to prove. Thus, we may assume  that $\omega(H)=5$. 
		By Theorem~\ref{M.th2},  there exists  an independent set $I\subseteq V(H)$ such that $I$ intersects each maximum clique of $H$.
		Set $V_1=V(H)\setminus I$ and $V_2=I$. Note that $H[V_1]$ is $K_5$-free  
		and $H[V_2]$ is $K_2$-free.
		
		Assume that $(p,q)=(4,3)$. If $\omega(H)\leq 3$, there is nothing to prove. Thus, we may assume that $\omega(H)\in\{4,5\}$. If $\omega(H)=5$, then
		  Theorem~\ref{M.th2} implies that there exists  a maximal 
		independent set $I\subseteq V(H)$  such that $I$ intersects each clique of size $5$ in $H$.	
		If $\omega(H)=4$, then we take  $I$ to be a maximal 
		independent set in $H$.
		Thus, in both cases, the subgraph $H\setminus I$ has maximum degree at most $5$ and 
		clique number at most $4$.  If $\omega(H\setminus I)\leq 3$, then there is nothing to prove. 
		So, we may assume that $\omega(H\setminus I)=4$. We further Assume that $H\setminus I$ has $\ell$ connected components $H_1,\ldots, H_{\ell}$.

		Let $H_j$ be a connected  component of $H\setminus I$ and assume that $H_j$ is isomorphic to the strong product of odd cycle  $C_{2t+1}$ and $K_2$ 
		 for some $t\geq 2$. 
		 Assume that $V(H_j)=\cup_{i=0}^{2t}\{x_i, x'_i\}$ and for each 
		 $0\leq i\leq 2t$ two vertices $x_i, x'_i$ are adjacent to each other and to $x_{i-1},x'_{i-1},x_{i+1},x'_{i+1}$ (with indices taken modulo $2t+1$).
Let $F_j\subset V(H_j)$ be such that $F_j$ is a maximum $K_4$-free subset of $V(H_j)$. 
Since $H_j$ is isomorphic to the strong product of $C_{2t+1}$ and $K_2$, it can be shown that $H_j\setminus F_j$ is a  graph with $t+1$ vertices and
exactly one edge. Furthermore, this edge this edge belongs to 
$\{x_ix_{i+1},x_ix'_{i+1},x'_ix_{i+1},x'_ix'_{i+1}\}$ for some $0\leq i\leq 2t$ (with indices taken modulo $2t+1$), as  illustrated in Figure~\ref{fi1}. 
For $u_i\in\{x_i,x'_i\}$ and  $u_{i+1}\in\{x_{i+1},x'_{i+1}\}$, let  $F_{_{u_iu_{i+1}}}$ denote
one of  a maximum $K_4$-free subset of $H_j$ such that $H\setminus F_{_{u_iu_{i+1}}}$ contains the edge $u_iu_{i+1}$.

 We shall prove that  there exists  a $F_j\subset V(H_j)$ such that $F_j$ is a maximum $K_4$-free subset of $H_j$
 such that  $H[I\cup (V(H_j)\setminus F_j)]$  contains no copy of $K_3$. Assuming that the stated assertion is false, 
consider $F_{_{x_0x_1}}$.
Therefore,   $H[I\cup\{x_0,x_1\}]$  contains a copy of $K_3$  
and there exists a vertex $u\in I$ such that $H[\{u, x_0,x_1\}]$ is isomorphic to $K_3$.
Since  $H_j$ is 5-regular and $\Delta(H)=6$, the vertex $u$ is  the only neighbor of  $x_0$ and $x_1$ in $I$.
Now consider $F_{_{x_0x'_1}}$,
the induced subgraph   $H[I\cup\{x_0,x'_1\}]$  contains a copy of $K_3$. Since  $u$ is  the only neighbor of  $x_0$ in $I$, 
we have $H[\{u, x_0,x'_1\}]$ is isomorphic to $K_3$, and consequently $u$ is adjacent to $x'_1$. 
By considering $F_{_{x_1x_2}}$ and  using a similar argument,  
  we can show that  $u$ must be adjacent to $x_2,x'_2$.
Again, using a similar argument for $F_{_{x_2x_3}}$,
   we can show that  $u$ must be adjacent to $x_3,x'_3$.
Therefore, the number of neighbors of $u$  is greater than $6$, which contradicts $\Delta(H)=6$.
So take a subset $F_j\subset V(H_j)$ such that $F_j$ is a maximum $K_4$-free subset of $H_j$
 and  $H[I\cup (V(H_j)\setminus F_j)]$  contains no copy of $K_3$.  Now, define $I'_j=V(H_j)\setminus F_j$.
 
 	\begin{figure}[ht]
		\begin{tabular}{ccc}

\begin{tikzpicture}[mystyle/.style={draw,shape=circle,fill=black}]
\node(x'0)[mystyle][label=below:$x'_0$] at (0.000000,1.450000) {};
\node(x'4)[mystyle][label=right:$x'_4$] at (-1.391268,0.370820) {};
\node(x'3)[mystyle][label=above:$x'_3$] at (-0.705342,-1.300820) {};
\node(x'2)[mystyle][label=above:$x'_2$] at (0.705342,-1.300820) {};
\node(x'1)[mystyle][label=left:$x'_1$] at (1.391268,0.370820) {};
\node(x0)[draw,shape=circle,fill=lightgray,label=above:$x_0$] at (0.000000,2.90000) {};
\node(x4)[mystyle][label=left:$x_4$] at (-2.782536,0.74164) {};
\node(x3)[mystyle][draw,shape=circle,fill=lightgray,label=left:$x_3$] at (-1.410684,-2.60164) {};
\node(x2)[mystyle][label=right:$x_2$] at (1.410684,-2.60164) {};
\node(x1)[draw,shape=circle,fill=lightgray,label=right:$x_1$] at (2.782536,0.74164) {};{{
\draw (x1) -- (x2);
    \draw (x2) -- (x3);
    \draw (x3) -- (x4);
    \draw (x4) -- (x0);
    \draw (x0) -- (x1);
    \draw (x'1) -- (x'2);
    \draw (x'2) -- (x'3);
    \draw (x'3) -- (x'4);
    \draw (x'4) -- (x'0);
    \draw (x'0) -- (x'1);    
    \draw (x1) -- (x'2);
    \draw (x2) -- (x'3);
    \draw (x3) -- (x'4);
    \draw (x4) -- (x'0);
    \draw (x0) -- (x'1);
    \draw (x'1) -- (x2);
    \draw (x'2) -- (x3);
    \draw (x'3) -- (x4);
    \draw (x'4) -- (x0);
    \draw (x'0) -- (x1);
    \draw (x1) -- (x'1);
    \draw (x2) -- (x'2);
    \draw (x3) -- (x'3);
    \draw (x4) -- (x'4);
    \draw (x0) -- (x'0);
}}
\end{tikzpicture}
		\end{tabular}\\
		\caption{The strong product of  $C_5$ and $K_2$. The induced subgraph on black vertices is  a maximum $K_4$-free subgraph, we call $F_{x_0x_1}$.}
		\label{fi1}
	\end{figure}
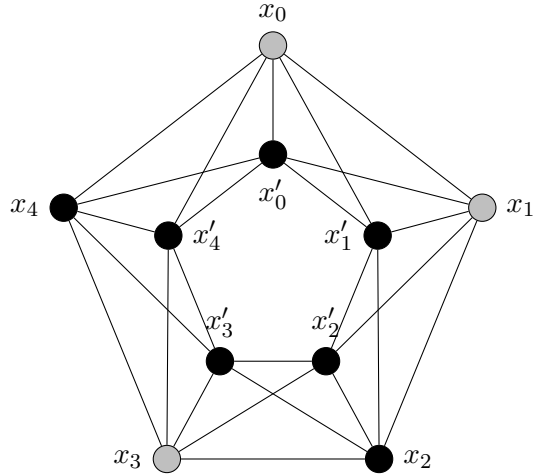
	
 
Now assume that $H_j$ is a connected component of $H\setminus I$ and
 it is not isomorphic to the strong product of $C_{2t+1}$ and $K_2$ for any $t\geq 2$,
		then Theorem~\ref{M.th2} implies that there is a maximal independent set
		 $I'_j\subseteq V(H_j)$ so that  $I'_j$ intersects all  cliques of size $4$ of $H_j$. 
		 It is clear that $H[I'_j]$ is $K_2$-free and $H_j\setminus I'_j$ is $K_4$-free
		
		Define $I'=\cup_{j=1}^{\ell} I'_j$,  $V_1=V(H)\setminus(I\cup I')$, and $V_2=I\cup I'$.
		 One can check that $H[V_1]$ is $K_4$-free and $H[V_2]$ is $K_3$-free.

		Suppose $H$ is a connected graph with maximum degree $\Delta(H)\geq 7$ and clique number $\omega(H)\leq \Delta(H)-1$.
		Assume that $p$ and $q$ are two positive integers where $p+q=\Delta(H)+1$ and $p\geq q\geq 2$. Assume that $\omega(H)=\Delta(H)-1$.
		If $H$ is isomorphic to odd cycle $C_{2t+1}$ and a complete graph $K_{r}$ for $t\geq 2$ and $r\geq 1$, then $\omega(H)=2r$ and $\Delta(H)=3r-1$.
		So $3r-2=2r$ and then $r=2$ and $\Delta(H)=5$, a contradiction.
		Therefore,  
		by using Theorem~\ref{M.th2}, we take a maximal independent set $I\subseteq V(H)$ 
		such that $I$ contains one vertex of any maximum clique of $H$. If
		$\omega(H)\leq \Delta(H)-2 $, only take a maximal independent set $I\subseteq V(H)$.
		In both cases, the graph $H\setminus I$ has maximum degree at most $\Delta(H)-1$ and clique number at most $\Delta(H)-2$.
		
Assume that $H\setminus I$ has $\ell$ connected components $H_1,\ldots, H_{\ell}$. Consider the connected component $H_i$ of $H\setminus I$.
 The graph $H_i$ has maximum degree at most $\Delta(H)-1$ and clique number at most $\Delta(H)-2$. 
 We may assume that $H_i$ has maximum degree  $\Delta(H)-1$.
 If $H_i$ does not have maximum degree $\Delta(H)-1$, let $v\in V(H_i)$ be a vertex with maximum degree in $H_i$ such that its degree is equal to 
			$\Delta(H_i)<\Delta(H)-1$. We add  $\Delta(H)-1-\Delta(H_i)$ new vertices to $H_i$ and join all of them to $v$, forming a new graph $H'_i$. The graph $H'_i$ 
			has maximum degree $\Delta(H)-1$ and clique number at most $\Delta(H)-2$.
  Thus, we can work with $H'_i$ instead of $H_i$.
   So, suppose  that $H_i$ has maximum degree  $\Delta(H)-1$ and clique number at most $\Delta(H)-2$. 

First assume that $q\geq 3$. Since $p+q=\Delta(H)+1$ and $\Delta(H)\geq 7$, we have  $p+(q-1)=(\Delta(H)-1)+1$ and $p+(q-1)\geq 7$. Thus, by using induction hypothesis, there exists  a partition of   $V(H_i)$ into $V_{i,1}, V_{i,2}$  such that $H[V_{i,1}]$ is $K_p$-free and  
		$H[V_{i,2}]$ is $K_{q-1}$-free. Define $V_1=\cup_{i=1}^{\ell}V_{i,1}$ and $V_2=(\cup_{i=1}^{\ell}V_{i,2})\cup I$.
				Since $I$ is independent set, we have $H[V_{2}]$ is $K_{q}$-free. Also, it is clear that  $H[V_{1}]$   is $K_{p}$-free.

Now assume that $q=2$.	Since $p+q=\Delta(H)+1$ and $\Delta(H)\geq 7$, we have  $(p-1)+q=(\Delta(H)-1)+1$ and $(p-1)+q\geq 7$. Thus, by using induction hypothesis, there exists 
a partition of   $V(H_i)$ into $V_{i,1}, V_{i,2}$  such that $H[V_{i,1}]$ is $K_{p-1}$-free and  
		$H[V_{i,2}]$ is $K_{q}$-free. Define $V_1=(\cup_{i=1}^{\ell}V_{i,1})\cup I$ and $V_2=\cup_{i=1}^{\ell} V_{i,2}$.
		Since $I$ is independent set, we have $H[V_{1}]$ is $K_{p}$-free. Also, it is clear that  $H[V_{2}]$   is $K_{q}$-free.

	\end{proof}
	
		In the sequel we will present the proof of Theorem~\ref{mth1}. Before we proceed with the detailed proof, let us provide an overview of the underlying idea.

	The main idea of the proof of Theorem~\ref{mth1} is that we  start with a partition of vertices of the graph $H$ into $S_0$ and $\overline{S_0}$ such that 
	$S_0$ is a maximum $K_p$-free subset of $H$, and subject to that $H[\overline{S_0}]$ has the minimum number of copies of $K_q$. This is somehow the best possible, 
	and if the number of copies of $K_q$ in $H[\overline{S_0}]$ is $0$, we have the required partition. We start with $S_0$ and then take a vertex $v_0$ in a copy of $K_q$ 
	in $H[\overline{S_0}]$ and move $v_0$ to $S_0$. Since $S_0$ is a maximum $K_p$-free subset, we obtain that $H[S_0\cup\{v_0\}]$ contains a copy of $K_p$. Then, we 
	try to find
	a vertex $y_0\neq v_0$ in $H[S_0\cup\{v_0\}]$ and move it from $S_0\cup\{v_0\}$ so that we can destroy all copies of $K_p$ in $H[S_0\cup\{v_0\}]$.
	We define ${S_1}\isdef ({S_0}\cup\{v_0\})\setminus\{y_0\}$.
	By carefully choosing $S_0$, $v_0$, and $v_0$, the vertex $y_0$ must be in a copy of $K_q$ in $H[\overline{S_1}]$.
	Then, we take a vertex $v_1$  in $H[\overline{S_1}]$, under certain assumptions, such that the number of copy of $K_q$ in $H[\overline{S_1}\setminus\{v_1\}]$
	is equal to that of $H[\overline{S_0}\setminus\{v_0\}]$.
		
		The key point is that  each vertex that moves into a part of the partition must lie a large clique in that part. The size of this clique is equal to the degree of 
	that vertex or one more.
	It is possible at the first step, we can find a copy of $K_{p+1}$, which leads us to contradiction. If not, we continue our procedure, and we will have 
	a sequence of $S_i$'s, $v_i$'s and $y_i$'s. 
		Since $H$ is finite, a carefully selected sequence of moves is guaranteed to eventually return to   itself in somehow, as we will elucidate in the 
		forthcoming proof. As a consequence, 
		 we will be able to find a copy of $K_{p+1}$ in $H$, which is a contradiction.

	The initial proof idea is influenced by the proof of Catlin and Lai's result on vertex arboricity~\cite{Catlin1}  as well as Catlin's  result on ordinary vertex 
	coloring~\cite{Catlin}.

In this regards, it is worth noting 
that Mozhan employed a related approach in ordinary coloring, utilizing a partition of the vertex set of graph into groups of color classes to establish bounds on the chromatic number in terms of 
 the maximum degree and clique number~\cite{mozhan}. This partition and and  its analogous partitions of graph vertices are commonly referred to as Mozhan's partitions.
The foundation of these concepts can be attributed to Lov{\'a}sz's seminal result in~\cite{Lovasz}, 
which states that given positive integers $d_1\geq d_2\geq\ldots\geq d_k$ such that $\sum_{i=1}^k d_i \geq \Delta(H)+1-k$, it is possible to partition the vertex set $V(H)$ into 
subsets $V_1,V_2,\ldots,V_k$ such that $\Delta(H[V_i])\leq d_i$ for all $1\leq i\leq k$. 
The proof is not hard: take a partition that minimizes the number of edges within the parts.
In~\cite{Catlin2}, Catlin enhanced the condition to $\sum_{i=1}^k d_i \geq \Delta(H)+2-k$ and expanded upon the idea by starting with a minimum partition and then moving 
vertices, while preserving minimality, until a required property is achieved.
 For more information about Mozhan's partition  and its usage see~\cite{mozhan, cranston2015,krab , rabern3} and for usage of Catlin's type partition 
 see~\cite{Bollob,Catlin2,rabern2}.

	\begin{proof}[\bf Proof of  Theorem \ref{mth1}]
		Without loss of generality suppose that $H$  is a connected graph. We have
		   $4\leq \omega(H)=p\leq \Delta(H)-2$. Since $\Delta(H)\geq 6$ and $q=\Delta(H)+1-p$, we have
		 		 $q\geq 3$. 
				 
	 Let $\F$ consist of all subsets $S\subseteq V(H)$ for which
 $H[S]$ is $K_p$-free and, subject to that, $S$ has the maximum possible size. 
If there is some $S\in \F$ for which  $H[\overline{S}]$ is  $K_{q}$-free, the statement holds.
Therefore,
 suppose that for any    $S\in \F$,  $H[\overline{S}]$ has a copy of $K_{q}$. 

In the sequel, we shall show  that a copy of $K_{p+1}$ is a subgraph of $H$, which  contradicts to $\omega(H)=p$.

		 As $S\in \F$, then by the maximality of $S$, for each vertex  $v\in  \overline{S}$, it 
		can be said that  $H[S\cup\{v\}]$ has a copy of $K_{p}$  that contains $v$. Therefore, $|N(v)\cap S|\geq {p-1}$ and $H[S]$ has a copy 
		of $K_{p-1}$. Now we have the following claim.
		\begin{claim}\label{f1} Let $S$ be a maximum $K_p$-free subset in $H$.
			Each vertex  $v$ of $\overline{S} $ either {\rm (a)}  lies in   at most two copies of $K_q$ in $H[\overline{S}]$ or 
			{\rm (b)} lies in a copy of $K_{q+1}$ that is a 
			connected component of $H[\overline{S}]$.	
		\end{claim}
		\begin{proof}[Proof of  Claim \ref{f1}]
		For each  vertex  $v$ of $\overline{S} $ we have $|N(v)\cap S|\geq {p-1}$. Since $q=\Delta(H)-(p-1)$, 
		 it follows that  the maximum degree of $H[\overline{S}]$ is at most $q$ and consequently  in $H[\overline{S}]$ each copy of $K_{q+1}$  is a connected component.					 
		 Suppose that (a) does not hold. Hence, there exists a vertex $v$ of $\overline{S}$ such that
			in $H[\overline{S}]$
			 vertex $v$ lies in at least three copies of $K_q$. Since $|N(v)\cap \overline{S}|\leq q$, the vertex $v$ must lie a copy of $K_{q+1}$ 
			 			in $H[\overline{S}]$.			
						This proves   Claim~\ref{f1}.

		\end{proof}

		Let  $S$ be a maximum $K_p$-free subset of $H$. For any vertex $v$ of ${\overline {S}}$, we have $K_p\subseteq H[S\cup\{v\}]$.
For the vertex $v$, define  $A_{v,{S}}$ as follows:
		\[ A_{v,S}\isdef\{y\in S\cup\{v\}~~|~~ y~{\rm lies ~in ~every ~copy ~of} ~K_p ~{\rm in }~H[S\cup \{v\}]\}.\]
			
	 		Take a member  $S_0\in \F$ such that $H[\overline{S_0}]$ has the minimum possible number of copies of $K_q$. 
Suppose that vertex $v_0$  in $\overline{S_0}$ is a fixed vertex such that $v_0$  belongs to a copy of $K_q$ in $H[\overline{S_0}]$.
		 By Claim~\ref{f1} the number of copies of   $K_q$ in $H[\overline{S_0}]$ that contain $v_0$ is equal to $1,2$ or $q$.

			 Since $v_0$ lies in a copy $K_q$ in $H[\overline{S_0}]$, we have $|N(v_0)\cap \overline{S_0}|\geq q-1$. Therefore, 
			  $|N(v_0)\cap S_0|\leq p$, otherwise,
		$\deg(v_0)\geq p+q=\Delta(H)+1$, a 
		contradiction.
		Since $H[S_0]$ is $K_p$-free and  $|N(v_0)\cap S_0|\leq p$, it follows that $v_0$ lies in at most two copies of $K_{p}$ in $H[S_0\cup\{v_0\}]$.
		 If the cardinality of the intersection of these two copies of $K_{p}$ is less than $p-1$, then 
		  $|N(v_0)\cap S_0|\geq p+1$, which again leads to a contradiction.
		 Therefore, $|A_{{v_0,S_0}}|\geq p-1$. 
		 Note that $v_0\in A_{v_0,S_0}$ 
		and since $p\geq 4$, we get that $|A_{v_0,S_0}|\geq 3$.
		From now on, if there is no ambiguity, we will write $A_{v_0}$ instead of $A_{v_0,S_0}$  for the  notational simplicity.
\begin{claim}\label{imp}
		Let $S$ be a maximum $K_p$-free subset of $H$. Let $v$ be a vertex in $\overline{S}$ such that $v$ belongs to a copy of 
		$K_q$ and the number of copies of $K_q$ in $H[\overline{S}\setminus\{v\}]$ is less than
		  the number of copies of $K_q$ in $H[\overline{S_0}]$. 
		Assume that  $y$  is an arbitrary vertex in $A_{v,S}$.
\begin{itemize}
		\item[(a)]  $(S\cup \{v\})\setminus \{y\}$ is a maximum $K_{p}$-free subset of $H$.
		\item[(b)]  The number of copies of $K_q$ in $H[(\overline{S}\setminus \{v\})\cup \{y\}]$  is greater than or equal to that of $H[\overline{S_0}]$.
                 \item[(c)]	The vertex $y$ belongs at least one  copy of $K_q$ in $H[(\overline{S}\setminus \{v\})\cup \{y\}]$.
		  \item[(d)] The vertex $y$ has at most $p$ neighbors in $S\cup\{v\}$.
		
		\end{itemize}
\end{claim}
	\begin{proof}[Proof of Claim~\ref{imp}]
	To prove (a), by the definition of $A_{v,S}$, we have  $(S\cup \{v\})\setminus \{y\}$ is $K_{p}$-free. 
		Since  $|(S\cup \{v\})\setminus \{y\}|=|S|$, it follows that $(S\cup \{v\})\setminus \{y\}$ is a $K_{p}$-free subset  of $H$ with maximum size. 
		
		To prove (b), since $\overline{S_0}$ is a maximum $K_p$-free subset of $H$ and subject to that 
		$H[\overline{S_0}]$ has the least possible copies of $K_q$, it follows that
		 the number of copies of $K_q$ in $H[(\overline{S}\setminus \{v\})\cup \{y\}]$ is not less than the number of copies  of 
		 $K_q$ in $H[\overline{S_0}]$. 
		 
		Part (c) is a direct consequence of (b).
		 To prove (d), by using (c), $y$ is in a copy of $K_q$ in $H[(\overline{S}\setminus \{v\})\cup \{y\}]$.
		 Therefore, $y$ has at least $q-1$ neighbors in $(\overline{S}\setminus \{v\})\cup \{y\}$ and consequently at most 
		 $p$ neighbors in $S\cup\{v\}$.
\end{proof}

		 Choose a vertex $y_0\in A_{v_0}$. Define $S_1\isdef (S_0\cup\{v_0\})\setminus\{y_0\}$. 
		By Claim~\ref{imp}~(a), 
		 $S_1$ is a maximum $K_{p}$-free subset of $H$. Using  Claim~\ref{imp}~(c)  and the fact that
		 $v_0$ lies in at least one  copy of $K_q$ in $\overline{S_0}$, we have $y_0$ lies in a copy of $K_q$ in $H[\overline{S_1}]$.
		 Therefore, by Claim~\ref{f1}, $y_0$ exactly lies in $1,2,$ or $q$ copies of $K_q$ in $H[\overline{S_1}]$. 
		 
		 We define $B_{y_0}$ as follows. If $y_0$  lies in at most $2$ copies of $K_q$ in $H[\overline{S_1}]$, then define
\[B_{y_0}\isdef\{y\in \overline{S_1}~~|~~ y~{\rm lies ~in ~every ~copy ~of}~~~K_q ~{\rm in }~H[\overline{S_1}]~{\rm that~contains}~ y_0\}.\]
		 If $y_0$  lies in $q$ copies of $K_q$ in $H[\overline{S_1}]$, then define
\[B_{y_0}\isdef N[y_0]\cap {\overline {S_1}}.\]
		
		If $y_0$ lies one copy of $K_q$ in $H[\overline{S_1}]$, then $|B_{y_0}|=q$.
		If it lies $q$ copies of $K_q$ in $H[\overline{S_1}]$, by Claim~\ref{imp}~(b), we get that $y_0$ lies in a copy of $K_{q+1}$ which is a connected component 
		of $H[\overline{S_1}]$, then $B_{y_0}=N[y_0]\cap {\overline {S_1}}$ and consequently $|B_{y_0}|=q+1$.
		If $y_0$ lies two copies of $K_q$ in $H[\overline{S_1}]$, then the intersection of the vertices of  these two copies of $K_q$ is $B_{y_0}$.
				 If the size of the intersection of these two copies of $K_{q}$ is less than $q-1$, then 
		  $|N(y_0)\cap {\overline{S_1}}|\geq q+1$. On the other hand, since $y_0$ lies in $A_{v_0}$, it lies in a copy of $K_p$ in
		  $H[S_0\cup\{v_0\}]$. Therefore, $y_0$ has at least $p-1$ neighbors in $S_1=(S_0\cup\{v_0\})\setminus\{y_0\}$. Then,
		  $\deg(y_0)\geq p+q$,
		 which  leads to a contradiction.
		 Therefore $q-1\leq |B_{y_0}|\leq q+1$.
		 
		Since  $q\geq 3$, we have $|B_{y_0}\setminus{y_0}|\geq 1$. Thus, 
		there is at least one vertex $v_1$ in $B_{y_0}\setminus{y_0}$. By the definition of $B_{y_0}$, 
		we have $y_0$ and $v_1$ must be in the same number of copies of $K_q$ in $H[{\overline{S_1}}]$ and consequently
		 the number of copies of $K_q$ in 
		$H[{\overline {S_0}}\setminus\{v_0\}]$ is equal to that of $H[{\overline {S_1}}\setminus\{v_1\}]$. 
		
		By using  $S_1$ is a maximum $K_p$-free subset of $H$, we have $H[S_1\cup\{v_1\}]$ contains 
		at least one copy of $K_p$. Now consider  $A_{v_1, S_1}$, for simplicity we write $A_{v_1}$. 
		Similar to the argument for  $|A_{v_0}|$, we  obtain that $|A_{v_1}|\geq p-1 \geq 3$. 
		
		Now by considering  $A_{v_0}$ and  $A_{v_1}$, we have the following claim.
		
		\begin{claim}\label{c6} If  $|A_{v_0}\cap A_{v_1}|\neq 0$, then $|A_{v_0}|=p$, $|A_{v_1}|=p$, and $K_{p+1}\subseteq H$.
		\end{claim}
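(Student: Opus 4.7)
The plan is to take any $w\in A_{v_0}\cap A_{v_1}$ and, leveraging the clique structure of $A_{v_0}$ and $A_{v_1}$, produce a copy of $K_{\Delta(H)-1}=K_{p+q-2}$ in $H$, establishing $|A_{v_0}|=p$ along the way. Both $A_{v_0}$ and $A_{v_1}$ are cliques (as intersections of copies of $K_p$) of sizes in $\{p-1,p\}$; the lower bound was established before the claim and the upper bound is $\omega(H)=p$. They contain $v_0$ and $v_1$ respectively. Note $w\neq y_0$ because $y_0\notin S_1\cup\{v_1\}\supseteq A_{v_1}$.

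First I would establish $|A_{v_0}|=p$ by contradiction. If $|A_{v_0}|=p-1$, then $v_0$ lies in exactly two copies of $K_p$ in $H[S_0\cup\{v_0\}]$ with vertex sets $A_{v_0}\cup\{a\}$ and $A_{v_0}\cup\{b\}$, where $a,b\in S_0\setminus A_{v_0}$; necessarily $ab\notin E(H)$, for otherwise $A_{v_0}\cup\{a,b\}$ induces $K_{p+1}$ contradicting $\omega(H)=p$. Since $w\in A_{v_1}$, any copy of $K_p$ in $H[S_1\cup\{v_1\}]$ through $v_1$ has vertex set $\{v_1,w\}\cup T$ for some $(p-2)$-clique $T\subseteq (S_0\setminus\{y_0\})\cup\{v_0\}$. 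Using the $\F$-membership of $S_0$ (so that no size-preserving swap can reduce the number of copies of $K_{p-1}$) together with the fact that $A_{v_0}\cup\{a\}$ and $A_{v_0}\cup\{b\}$ are the only copies of $K_p$ in $H[S_0\cup\{v_0\}]$ through $v_0$, a careful case analysis should force one of $a$ or $b$ to be adjacent to every vertex of $T\cup\{v_1,w\}$, yielding $K_{p+1}\subseteq H$ and the desired contradiction.

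Once $|A_{v_0}|=p$ is established, $A_{v_0}$ itself is a copy of $K_p$ in $H$ and the unique copy of $K_p$ through $v_0$ in $H[S_0\cup\{v_0\}]$. Mirroring the argument of Claim~\ref{c4}, I would show that every vertex $z\in V(K_q^0)\setminus\{v_0\}$ is adjacent to every vertex of $A_{v_0}\setminus\{y_0\}$. The bound $\deg(z)\leq p+q-1$ together with $z$ having at least $q$ neighbours in $\overline{S}_0$ (since $z\in V(K_q^0)$ and $v_0$ has the maximum number of $K_q$'s through it in $H[\overline{S}_0]$) yields $|N(z)\cap((S_0\setminus\{y_0\})\cup\{v_0\})|\leq p-1$; the maximality of $S_1\in\F$ forces equality, and these $p-1$ neighbours form a $K_{p-1}$ which, by $\F$-minimality of $S_0$ and the uniqueness of the $K_p$ through $v_0$, must coincide with $A_{v_0}\setminus\{y_0\}$. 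Consequently $(A_{v_0}\setminus\{y_0\})\cup V(K_q^0)$ is a clique of size $(p-1)+q-1=p+q-2=\Delta(H)-1$, giving $K_{\Delta(H)-1}\subseteq H$.

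The main obstacle will be the first step. Deriving $K_{p+1}$ from the hypothesis $|A_{v_0}|=p-1$ requires controlling the interplay between the two hypothetical copies of $K_p$ through $v_0$ in $H[S_0\cup\{v_0\}]$ and the copies of $K_p$ through $v_1$ in $H[S_1\cup\{v_1\}]$, all of which share the pinned vertex $w$. The $\F$-optimality of both $S_0$ and $S_1$, combined with the adjacency structure of $A_{v_0}$, should rule out all configurations except those producing a $(p+1)$-clique using $a$ or $b$; the small cases (especially $p=3$, where $A_{v_0}\setminus\{v_0,y_0\}$ is empty and $w$ is forced to equal $v_0$) may need separate treatment.
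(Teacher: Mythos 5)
There is a genuine gap in both halves of your argument. For the first half ($|A_{v_0}|=p$), you reduce to producing a $K_{p+1}$ through $a$ or $b$ via ``a careful case analysis,'' but that analysis is never carried out, and it is not the route the paper takes: the paper observes that when $|A_{v_0}|=p-1$ the two copies of $K_p$ through $v_0$ force $w$ and $y_0$ to have the same $p$ neighbours in $S_0\cup\{v_0\}$ up to swapping $w\leftrightarrow y_0$; then, using $v_0v_1\notin E(H)$ (which follows from $V(K_q^0)\cap V(K_q^1)=\varnothing$ and $|N(v_0)\cap S_0|=p$), the copy of $K_p$ through $w$ and $v_1$ in $H[S_1\cup\{v_1\}]$ yields a $K_{p-1}$ in $H[N[w]\cap(S_0\setminus\{y_0\})]$, and attaching $y_0$ gives $K_p\subseteq H[S_0]$ -- a contradiction with $S_0$ being $K_p$-free, not with $\omega(H)=p$. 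Your sketch as written does not establish this step.

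The second half is where the argument actually breaks. You try to mirror Claim~\ref{c4} to show every $z\in V(K_q^0)\setminus\{v_0\}$ is adjacent to all of $A_{v_0}\setminus\{y_0\}$, but (a) the premise that such a $z$ has at least $q$ neighbours in $\overline{S}_0$ is unjustified -- membership in $K_q^0$ only gives $q-1$, and the maximality of the number of $K_q$'s through $v_0$ can be realized by the value $1$; (b) even granting it, the count gives $|N(z)\cap S_1|\leq p$, not $\leq p-1$, since $v_0\in N(z)$ is added to $S_1$ while $y_0$ need not be deleted from $N(z)\cap S_0$; and (c) your argument never uses the hypothesis $A_{v_0}\cap A_{v_1}\neq\varnothing$, which should be a warning sign -- if it worked, the entire construction of $S_1,v_1$ would be superfluous. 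In Claim~\ref{c4} the extra neighbour of $z$ in the complement comes from the assumption $|V(K_q^0\setminus\{v_0\})\cap V(K_q^y)|=q-2$, which is exactly what is unavailable here: $y_0$ was chosen so that $V(K_q^0)\cap V(K_q^{y_0})=\varnothing$. The paper instead pins down $|A_{v_0}\cap A_{v_1}|=p-2$ by degree counting, shows each $z$ in the intersection has exactly $q-1$ neighbours in $\overline{S}_1\setminus\{v_1\}$ so that its clique $K_q^z$ must contain $y_0\in V(K_q^1)$, and then (using $p\geq 4$ to obtain two such vertices) invokes Claim~\ref{c5} relative to $S_1$, $v_1$, and $K_q^1$.
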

		\begin{proof}[Proof of Claim~\ref{c6}] 
					 By way of contradiction, suppose  that  $|A_{v_0}|=p-1$.
			Consider a vertex $w\in A_{v_0}\cap A_{v_1}$. By Claim~\ref{imp}~(d), we know that both vertices $v_0$ and $w$
			 have at most $p$ neighbors in $S_0\cup\{v_0\}$.			
 From this fact and  $|A_{v_0}| = p-1$ we conclude that  $v_0$  lies in exactly two copies of $K_p$, 
denoted as $K_{p}$ and $K'_{p}$, in $H[S_0 \cup \{v_0\}]$. 
The intersection of $V(K_p)$ and $V(K'_p)$ is $A_{v_0}$. 
Let us assume that $V(K_p) \setminus A_{v_0} = \{x_{0}\}$ and $V(K'_p) \setminus A_{v_0} = \{x'_{0}\}$. 
As $S_0$ is $K_p$-free, the vertices $x_{0}$ and $x'_0$ are not adjacent.
Since  $w\in A_{v_0} $,  $w$ must be adjacent to all vertices of $A_{v_0}\setminus\{w\}$, $x_{0}$, and $x'_0$ in $H[S_0\cup \{v_0\}]$.

Considering that $w\in A_{v_1}$, the vertex  $w$ must be in  every copy of $K_p$ that contains $v_1$  in $H[S_1\cup\{v_1\}]$.
The vertex $w$ has $p$ neighbors in $S_1\cup\{v_1\}$, namely $v_1$, $x_{0}$, $x'_0$, and
all $p-3$ vertices of $A_{v_0}\setminus\{w, y_0\}$.
Since  $v_1$ and  $w$ are adjacent and both of them are in the same copies of $K_p$ in $H[S_1\cup\{v_1\}]$, 
$v_1$ must be adjacent to at least $p-2$ neighbors of $w$  in $H[S_1\cup\{v_1\}]$ that form a copy of $K_{p-2}$.
Note that
 since $x_{0}$ and $x'_0$ are not adjacent,  $v_1$ must be adjacent all $p-3$ vertices  of $A_{v_0}\setminus\{w, y_0\}$
 and at least one of $x_{0}$ and $x'_0$, say $x_0$.
Also $v_1$ is in $B_{y_0}$ and then is adjacent to $y_0$. Consequently, the induced subgraph on $A_{v_0}\cup\{x_0,v_1\}$ forms a copy $K_{p+1}$, a contradiction (see Figure~\ref{figa0}).
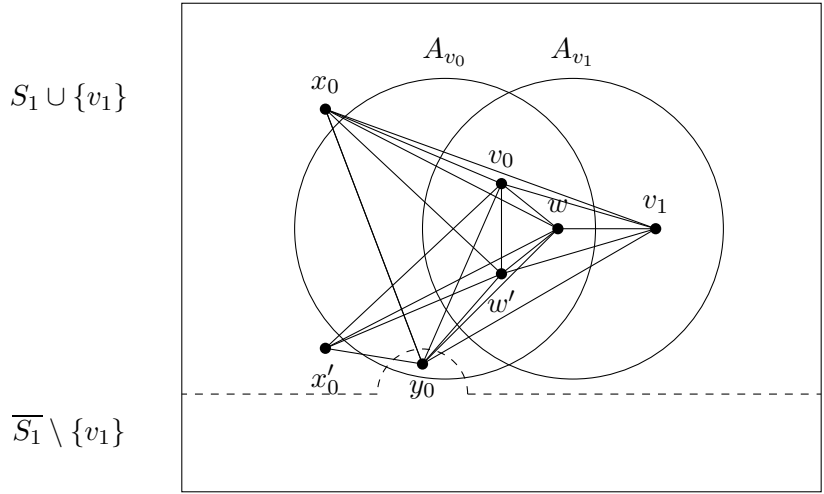
\begin{figure}[htbp]
    \centering
    \begin{tikzpicture}
        \draw (5,-3.5) rectangle (-3.5,3);

        \draw (0,0) circle (2cm);
        \node at (1.5,0) [fill=black, circle, inner sep=1.5pt, label=above:{$w$}] {};
        \node at (2.8,0) [fill=black, circle, inner sep=1.5pt, label=above:{$v_1$}] {};
        \node at (135:2.25cm) [fill=black, circle, inner sep=1.5pt, label=above:{$x_0$}] {};
        \node at (225:2.25cm) [fill=black, circle, inner sep=1.5pt, label=below:{$x'_0$}] {};
       \node at (-5,2.2) [ label=below:{$S_1\cup\{v_1\}$}] {};
        \node at (-5,-2.2) [ label=below:{${\overline{S_1}}\setminus\{v_1\}$}] {};

        \draw (1.7,0) circle (2cm);
        \node at (0.75,0.6) [fill=black, circle, inner sep=1.5pt, label=above:{$v_0$}] {};
        \node at (0.75,-0.6) [fill=black, circle, inner sep=1.5pt, label=below:{$w'$}] {};
        \node at (-0.3,-1.8) [fill=black, circle, inner sep=1.5pt, label=below:{$y_0$}] {};
        \node at (1.7,2.8) [ label=below:{$A_{v_1}$}] {};
        \node at (0,2.8) [ label=below:{$A_{v_0}$}] {};

        \draw (0.75,0.6) -- (1.5,0);
        \draw (0.75,-0.6) -- (1.5,0);
        \draw (0.75,0.6) -- (0.75,-0.6); 
         \draw (0.75,0.6) -- (-0.3,-1.8);
        \draw (0.75,-0.6) -- (-0.3,-1.8);
        \draw (-0.3,-1.8) -- (1.5,0); 
        \draw (0.75,0.6) -- (225:2.25cm) ;
        \draw ((225:2.25cm)  -- (-0.3,-1.8);
        \draw (225:2.25cm)  -- (1.5,0);
         \draw (0.75,0.6) -- (135:2.25cm) ;
         \draw (0.75,-0.6) -- (135:2.25cm) ;
         \draw (0.75,-0.6) -- (225:2.25cm) ;
        \draw ((135:2.25cm)  -- (-0.3,-1.8);
        \draw (135:2.25cm)  -- (1.5,0);
        \draw (135:2.25cm)  -- (-0.3,-1.8); 
        \draw ((2.8,0)  -- (-0.3,-1.8);
        \draw (2.8,0)  -- (1.5,0);
        \draw (2.8,0)  -- (0.75,0.6);
        \draw (2.8,0)  -- (0.75,-0.6);
        \draw (135:2.25cm)  -- (2.8,0);
        \draw[dashed]  (0.3,-2.2) arc (0:180:0.6cm);
        \draw[dashed]  (0.3,-2.2) -- (5,-2.2);
        \draw[dashed]  (-0.9,-2.2) -- (-3.5,-2.2);

]
    \end{tikzpicture}
    \caption{Claim~{\ref{c6}}: $p=5$, $|A_{v_0}|=p-1=4$, we have a copy of $K_6$ with vertex set $\{v_0,y_0,v_1,w,w',x_0\}$.}
    		\label{figa0}
\end{figure}

			Now suppose, by way of contradiction, that  $|A_{v_1}|=p-1$.
			Therefore,  $v_1$ lies in the two copies of $K_p$ in $H[S_1\cup \{v_1\}]$.
			Since $S_1$ is $K_p$-free  and $H[S_1\cup\{v_1\}]$ has two copies of $K_p$, there are two nonadjacent vertices $x_1, x'_1$ in $S_1$ such that
                          both of them are adjacent to all vertices of $A_{v_1}$, including $w$.
                          
                          If $v_1$ is adjacent all vertices of $A_{v_0}$, then $H[A_{v_0}\cup\{v_1\}]$ is isomorphic to $K_{p+1}$, a contradiction. 
                          Assume that there is a vertex $\hat v$ in $A_{v_0}$  that is not adjacent to $v_1$.

                           We shall show that $w$ has $p+1$ neighbors in $S_0\cup\{v_0\}$, which is contradiction by Claim~\ref{imp}~(d).
                          The vertex $w$ is adjacent to $x_1, x'_1, {\hat v}, y_0$, and all $p-3$ vertices  in $A_{v_1}\setminus\{w,v_1\}$. 
                          	Therefore, we can suppose that $|A_{v_1}|=|A_{v_0}|=p$.

			Note that $y_0$ and $v_0$ belong to $A_{v_0}$. 
						 The vertex $y_0$ is not in $S_1$, this yields $y_0\not\in A_{v_1}$. Therefore, $|A_{v_0}\cap A_{v_1}|\leq p-1$.

			Assume that  $|A_{v_0}\cap A_{v_1}|\leq p-3$. For a vertex  $w\in A_{v_0}\cap A_{v_1}$,  it has $p-1$ neighbors in $A_{v_0}$. We shall show
			the vertex $w$ has at least two other neighbors in $S_0\setminus A_{v_0}$. 	Note that $|A_{v_1}\setminus (A_{v_0}\cup\{v_1\})|\geq 2$.
			So, 
			we can take two vertices in $A_{v_1}\setminus (A_{v_0}\cup\{v_1\})$  as neighbors of $w$ in $S_0\setminus A_{v_0}$.
		Then , we have $|N(w)\cap (S_0\cup\{v_0\})|\geq p+1$, which is contradiction with Claim~\ref{imp}~(d).
			Hence, we can assume that $|A_{v_0}\cap A_{v_1}|\geq p-2$.

			If  $|A_{v_0}\cap A_{v_1}|= p-1$. Therefore, $A_{v_0}\setminus\{y_0\} \subset A_{v_1}$. Therefore, $v_1$ is adjacent to
			all vertices of $A_{v_0}$. So, 
			we have a copy of $K_{p+1}$ with vertex set  $A_{v_0}\cup\{v_1\}$.
			
			 Now assume that $|A_{v_0}\cap A_{v_1}|= p-2$.
			 Let $A_{v_0}\setminus A_{v_1}=\{y_0, {\hat v}\}$.
			Note that $v_1$ is not adjacent to ${\hat v}$. If $v_1$ is  adjacent to ${\hat v}$, then
			 $H[A_{v_0}\cup \{v_1\}]$ is isomorphic to $K_{p+1}$.
			Each vertex  $w \in A_{v_0}\cap A_{v_1}$ is adjacent to $p-1$ neighbors in $A_{v_1}$ and another vertex in $A_{v_0}\setminus A_{v_1}$.
			By using Claim~\ref{imp}~(c), the vertex $y_0$  lies in at least a copy of $K_q$ in   $H[\overline{S_1}]$. Then,
			 $y_0$ has at least $q-1$ neighbors in $\overline{S_1}$ and consequently
			  $y_0$ has at least $q-2$ neighbors in $\overline{S_1}\setminus \{v_1\}$. On the other hand, 
                           since $y_0$  lies in $A_{v_0}$ and a copy of $K_p$ in $H[S_0\cup\{v_0\}]$, it yields  $y_0$ has at least $p-1$
                           neighbors in $S_0\cup\{v_0\}$	 and consequently  
$y_0$ has at least $p$
                           neighbors in $S_1\cup\{v_1\}$. Therefore, $y_0$ has at most $q-1$ neighbors in $\overline{S_1}\setminus \{v_1\}$. Hence,
                           			  we can consider the the following two cases.
\item[\bf Case 1:] $y_0$ has exactly $q-1$ neighbors in $\overline{S_1}\setminus \{v_1\}$.
			Therefore, $y_0$ has at most $p$ neighbors in $S_1\cup\{v_1\}$.
			Assume that $w\in A_{v_0}\cap A_{v_1}$. As $y_0$ has $p-1$ neighbors in $(S_1\cup\{v_1\})\setminus\{w\}$ and 
			two of them, $v_1$ and $\hat{v}$ are not adjacent, it yields  $H[(S_1\cup\{v_1,y_0\})\setminus\{w\}]$ does not contain any copy of $K_p$,
			which contradicts with $S_0$ is a maximum $K_p$-free subset of $H$. 
			
\item[\bf Case 2:] 

$y_0$ has exactly $q-2$ neighbors in $\overline{S_1}\setminus \{v_1\}$. 
			Each $w\in A_{v_0}\cap A_{v_1}$ has at least $p$ neighbors in
			$S_1\cup \{v_1\}$ and consequently at most $q-1$ neighbors in
			$\overline{S_1}\setminus \{v_1\}$.
			Since $H[{\overline {S_0}}]$ has  the minimum number of copies of $K_q$  and 
			$v_0$ lies in at least one copy of $K_q$ in $H[{\overline {S_0}}]$, 
			it implies $y_0$ must lie at least one copy of $K_q$ in $H[{\overline {S_1}}]$.   We took $v_1\in B_{y_0}$. 
			Therefore,   $w$ must be contained in  at least one copy of $K_q$,
			say $K_q^w$, in $H[({\overline {S_1}}\setminus\{v_1\})\cup\{w\}]$.
			 As $y_0w\in E(H)$ and
			$|N(w)\cap (\overline{S_1} \setminus\{v_1\})|\leq q-1$, we  deduce that 
			   $y_0$ must lie in  $K_q^w$ in $H[(\overline{S_1} \setminus\{v_1\})\cup\{w\}]$. 
			Since $y_0$ has exactly $q-2$ in $\overline{S_1}\setminus \{v_1\}$, it yields that
			$K_q^w$ consists of $w,y_0$, and $q-2$ neighbors of $y_0$  in $\overline{S_1}\setminus \{v_1\}$.
			Therefore each vertex of $A_{v_0}\cap A_{v_1}$ is adjacent to $y_0$, and its $q-2$ neighbors  in $\overline{S_1}\setminus \{v_1\}$.
			Also, we have  $v_1$ is adjacent to $y_0$, and its $q-2$ neighbors  in $\overline{S_1}\setminus \{v_1\}$.
Therefore, the induced subgraph on $A_{v_0}\cap A_{v_1}$, $v_1,y_0$, and $q-2$ neighbors of $y_0$ in $\overline{S_1}\setminus \{v_1\}$ 
is isomorphic to 	$K_{p+q-2}$,
a contradiction.

		\end{proof}
		
		Using  Claim \ref{c6}, we can assume   that $A_{v_0}\cap A_{v_1}= \varnothing$. Let $y_1$ be an arbitrary vertex of $A_{v_1}\setminus\{v_1\}$.
		 For $i\geq 1$,  suppose that we have already defined $S_{i}, B_{y_{i-1}}, v_{i}, A_{v_{i}}$, and $y_{i}$.
		If we have $A_{v_j}\cap A_{v_i}= \varnothing$ for each $0\leq j<i$, we proceed to  define $S_{i+1}$, $B_{y_{i}}$, $v_{i+1}$, $A_{v_{i+1}}$, and $y_{i+1}$ in the following manner:
		we first define $S_{i+1}$, which allows us to define $B_{y_{i}}$. Then, we define $v_{i+1}$,  $A_{v_{i+1}}$. Finally, we define $y_{i+1}$.
		
\begin{enumerate} 
\item $S_{i+1}\isdef(S_{i}\cup\{v_{i}\})\setminus\{y_{i}\}$.	
\item If $y_{i}$ lies in at most $2$ copies of $K_q$ in $H[\overline{S_{i+1}}]$, define 
$$B_{y_{i}}\isdef\{y\in \overline{S_{i+1}}~~|~~ y~{\rm lies ~in ~every ~copy ~of}~~~~~~~~~K_q ~{\rm in }~H[\overline{S_{i+1}}]~{\rm that~ contains}~y_{i}\}.$$
If $y_{i}$ lies in $q$ copies of $K_q$ in $H[\overline{S_{i+1}}]$, define
 \[B_{y_{i}}\isdef N[y_{i}]\cap \overline{S_{i+1}}.\]

\item Take $v_{i+1}\in B_{y_{i}}\setminus\{y_0,\ldots,y_{i}\}$.
\item $A_{v_{i+1}}\isdef A_{v_{i+1}, S_{i+1}}$.
\item Take $y_{i+1}\in A_{v_{i+1}}\setminus\{v_{i+1}\}$.

\end{enumerate} 

Note that $S_0$ and $S_1$ are  $K_p$-free subsets of $H$ with maximum size. Assume that  $S_i$ is a $K_p$-free subset of $H$.
   By applying Claim~\ref{imp}~(a), we get that  $H[S_{i+1}]$ is $K_p$-free, too. Since  $|S_{i+1}|=|S_{i}|=|S_{0}|$, we get that
   $S_{i+1}$ is a $K_p$-free subset of $H$ with maximum size.
				Thus, the induced subgraph $H[S_{i+1}\cup\{v_{i+1}\}]$ has a copy of $K_p$.
				Since $H[S_{i+1}]$ is $K_p$-free and $|N(v_{i+1})\cap S_{i+1}|\leq p$,
				we can use a similar argument as  $|A_{v_0}|$ and  obtain that $|A_{v_{i+1}}|\geq p-1 \geq 3$. 
		We  note that $v_{i+1}\in A_{v_{i+1}}$. 

 Since $q\geq 3$, Parts (c) and (d) of the next  claim allow us  to choose a vertex $v_{i+1}$ in $B_{y_{i}}\setminus\{y_0,\ldots,y_{i}\}$.

		\begin {claim}\label{0p1}
		Let $i$ be a positive integer such that for each pair $j<j'\leq i$, we have $A_{v_j}\cap A_{v_{j'}}=\varnothing$.
		 Let $\iota\leq i$  be a nonnegative integer. Then, the following items hold. 
		\begin{itemize}
\item[(a)]  $A_{v_\iota}\setminus\{y_\iota\}\subseteq S_{i+1}$. 
\item[(b)] If    $|A_{v_{\iota}}|=p-1$, 
		  then there exist two nonadjacent vertices $x_{\iota}$ and $x'_{\iota}$ in  $N(v_{\iota})\cap    
		  S_{\iota}$  that  are not in $A_{v_{\iota}}$, and are adjacent to all vertices in $A_{v_{\iota}}$. 
		  Furthermore both vertices $x_{\iota}$ and $x'_{\iota}$ belong to  $S_{i+1}$ and do not belong $A_{v_{i}}$.
\item[(c)] If  $|B_{y_{i}}|=q-1$, 
we have $|B_{y_{i}}\cap \{y_0,y_1, \ldots, y_{i-1}\}|= 0$.
\item[(d)] If $|B_{y_{i}}|\in\{q,q+1\}$, we have $|B_{y_{i}}\cap \{y_0,y_1, \ldots, y_{i-1}\}|\leq 1$.
\item[(e)] The vertex  $y_\iota$ belongs  to ${\overline {S_{i+1}}}$.
\end{itemize}

\end{claim}
\begin{proof}[Proof of Claim~\ref{0p1}] 
To prove (a),
 by using the definition of $S_{\iota+1}$,  we have $A_{v_\iota}\setminus\{y_\iota\}\subseteq S_{\iota+1}$.
 For $\iota+1\leq i'\leq i$,
 assume that  and  $A_{v_\iota}\setminus\{y_\iota\}\subseteq S_{i'}$.
		Since $(A_{v_{\iota}}\setminus\{y_\iota\})\cap A_{v_{i'}}=\varnothing$
		and  we choose  $y_{i'}$ from $A_{v_{i'}}$, it follows that
		 $A_{v_\iota}\setminus\{y_\iota\}\subseteq (S_{i'}\cup\{v_{i'}\})\setminus\{y_{i'}\}= S_{i'+1}$.

To prove (b), note that $v_{\iota}$ lies at least one copy of $K_q$ in $\overline{S_{\iota}}$ and 
consequently it has at most $p$ neighbors in $S_{\iota}$.
 From this fact and  $|A_{v_\iota}| = p-1$ we obtain  $v_\iota$ lies in exactly two copies of $K_p$, 
denoted as $K_{p}$ and $K'_{p}$, in $H[S_\iota \cup \{v_\iota\}]$. The intersection of $V(K_p)$ and $V(K'_p)$ is $A_{v_\iota}$. 
Let us assume that $V(K_p) \setminus A_{v_\iota} = \{x_{\iota}\}$ and $V(K'_p) \setminus A_{v_\iota} = \{x'_{\iota}\}$. 
Then we have $x_{\iota}$ and $x'_\iota$ are not in $A_{v_\iota}$.
Since $S_\iota$ is $K_p$-free, the vertices $x_{\iota}$ and $x'_\iota$ are not adjacent.

\noindent Now  assume that $\iota<i$ and  $x_\iota$ is  in $A_{v_{i}}$, then $x_\iota$ has at least $p-2$ neighbors in $A_{v_{i}}$. 
 We know that $|A_{v_\iota}\setminus\{y_\iota\}|=p-2$. Since $p\geq 4$, 
 the vertex $x_\iota$ has at least two neighbors in $A_{v_\iota}\setminus\{y_\iota\}$.
 By~(a), $A_{v_\iota}\setminus\{y_\iota\}\subseteq S_{i}$. Therefore, 
 since these  two neighbors of $x_\iota$  are adjacent, at least one of them must be in
$A_{v_{i}}$. 
This implies that $A_{v_\iota} \cap A_{v_{i}} \neq \varnothing$, which leads to a contradiction. Therefore, $x_\iota$ is not in $A_{v_{i}}$. 
To prove $x_\iota$ is  in $S_{i+1}$. Note that $x_\iota$ is  in $S_{\iota+1}$. For $i'$ where $\iota+1\leq i'\leq i$, assume that $x_\iota\in S_{i'}$. 
Since $x_\iota\not \in A_{v_{i'}}$, we have $x_\iota\neq y_{i'}$. As $S_{i'+1}\isdef (S_{i'}\cup\{v_{i'}\})\setminus\{y_{i'}\}$, we conclude that
$x_\iota$ is in $S_{i'+1}$. By symmetry, we can conclude that $x'_\iota$ is  in $S_{i+1}$ and is not in $A_{v_{i}}$.

To prove (c),
by  way of contradiction, suppose that for some  $j< i$ we have $y_j\in B_{y_{i}} $.
			Since $|B_{y_{i}}|=q-1$,  $y_j$ lies in two copies of $K_q$ in $H[\overline{S_{i+1}}]$. 
			Therefore, the vertex $y_j$ must have $q$ neighbors in ${\overline {S_{i+1}}}$. Also, $y_j$ is adjacent to $v_{j+1}$.
			 Since $j< i$,  from (a)  we have $(A_{v_j}\setminus\{y_j\})\subseteq S_{i+1}$ 
			 and $(A_{v_{j+1}}\setminus\{y_{j+1}\})\subseteq S_{i+1}$. 
			 The last inclusion relation yields $v_{j+1}\in S_{i+1}$.

			If $|A_{v_j}|=p$, then $y_j$ has $p-1$ neighbors in $A_{v_{j}}\setminus\{y_j\}$. Also, it is adjacent $v_{j+1}$ and $q$ vertices in 
			$\overline{S_{i+1}}$.
			Therefore, $\deg(y_j)\geq p+q$ which is not possible. 
			
			Assume that $|A_{v_j}|=p-1$. Then, $|N(y_j)\cap A_{v_j}|=p-2$ 
			and from (b) there are two nonadjacent vertices   $x_j,x'_j$ in $N(y_j)\cap S_j$ 
			which are not in $A_{v_j}$ and $\{x_j,x'_j\}\subseteq S_{i+1}$.
			Then, in $H[S_{i+1}]$, the vertex $y_j$ is adjacent to
$v_{j+1}$, all $p-2$ vertices in $A_{v_j}\setminus\{y_j\}$, $x_j$, and $x'_j$.
Thus, $y_j$ has $p+1$ neighbors in
			$S_{i+1}$.  Since $y_j$ has $q$ neighbors in ${\overline {S_{i+1}}}$, it follows that  $\deg(y_j)\geq p+q+1$, which is not possible. 

To prove (d),  by way of contradiction, suppose that $|B_{y_{i}}\cap \{y_0,y_1, \ldots, y_{i-1}\}|\geq 2$. 
 Assume that $\{y_{\iota}, y_{\iota'}\}\subset B_{y_{i}}$ where $\iota<\iota'\leq i-1$. 
			As  $|A_{v_\iota}\cap A_{v_{\iota'}}|= 0$ and $|A_{v_\iota}\cap A_{v_{i}}|= 0$,
			we have $y_{\iota'}\notin A_{v_{\iota}}$ and $y_{i}\notin A_{v_{\iota}}$. 
			
			The size of $A_{v_\iota}$ is  $p$ or $p-1$. 
			First assume that   $|A_{v_\iota}|=p$. Therefore, $y_{\iota}$  is adjacent to $y_{\iota'}, y_{i} $, and
			$p-1$ vertices of $A_{v_\iota}\setminus\{y_{\iota}\}$ in $S_{\iota}\cup\{v_{\iota}\}$. Thus, in this case
			we have $|N(y_{\iota})\cap (S_{\iota}\cup\{v_{\iota}\})|\geq p+1$, a contradiction 
			with Claim~\ref{imp}~(d).

			Assume that $|A_{v_\iota}|=p-1$. There exist two nonadjacent vertices $x_{\iota}$ and $x'_{\iota}$ in  $N(v_{\iota})\cap S_{\iota}$
 that  are not in $A_{v_\iota}$, and are adjacent to all vertices in $A_{v_\iota}$.
 Since $y_i$ and $y_{\iota'}$ are adjacent, we have $\{y_i,y_{\iota'}\}\neq \{x_{\iota},x'_{\iota}\}$.
 Therefore, $y_{\iota}$  is adjacent to $y_{\iota'}, y_{i}, x_{\iota},x'_{\iota}$, and
			$p-2$ vertices of $A_{v_\iota}\setminus\{y_{\iota}\}$ in $S_{\iota}\cup\{v_{\iota}\}$. Therefore, in this case
			 $|N(y_{\iota})\cap (S_{\iota}\cup\{v_{\iota}\})|\geq p+1$, a contradiction 
			with Claim~\ref{imp}~(d).
			
To prove (e), note that $y_\iota$ is in $(\overline{S_{\iota}}\setminus\{v_{\iota}\})\cup \{y_{\iota}\}=\overline{S_{\iota+1}}$. 
For $\iota+1\leq i$, assume that  $y_\iota\in{\overline {S_{i}}}$.
Since we choose $v_{i}$ in $B_{y_{i-1}}\setminus\{y_0,y_1,\ldots, y_{i-1}\}\subset \overline{S_{i}}$,  it implies that $y_\iota\neq v_{i}$. 
Therefore,  $y_\iota$ belongs to
$ (\overline{S_{i}}\setminus\{v_{i}\})\cup\{y_{i}\}= \overline{S_{i+1}}$.

\end{proof}


		Now assume that we defined $A_{v_{0}},A_{v_{1}},\ldots, A_{v_{\ell}}$ such that $\ell$ is the maximum number for which 
		$A_{v_j}\cap A_{v_{j'}}= \varnothing$ for each pair $j$ and $j'$ where $j<j'\leq \ell-1$. Since $H$ is a finite graph, such an $\ell$ exists.
		Hence, there exists some $i_0\leq \ell-1$ such that $|A_{v_{i_0}}\cap A_{v_{\ell}}|\neq 0$.
		Since we choose $v_{\ell}\in B_{y_{\ell-1}}\setminus \{y_0,y_1, \ldots, y_{\ell-1}\}$, it implies that
		$v_{\ell}\neq y_{i_0}$ and consequently $A_{v_{i_0}}\neq A_{v_{\ell}}$. It follows that $|A_{v_{i_0}}\cap A_{v_{\ell}}|\leq p-1$.
		  
	\noindent	Without loss of generality, we can  assume that $i_0=0$.  	
			By using this fact that $1\leq | A_{v_0}\cap A_{v_{\ell}} |\leq p-1$, we shall show that a copy $K_{p+1}$ is contained in $H$.


\begin{claim}\label{intp2}
$| A_{v_0}\cap A_{v_{\ell}}|\geq p-2$.
\end{claim}		
\begin{proof}[Proof of Claim~\ref{intp2}] 
Assume that $| A_{v_0}\cap A_{v_{\ell}}|\leq p-3$. Let $w\in  A_{v_0}\cap A_{v_{\ell}}$. In the following we show that regardless of the sizes of $A_{v_0}$ and $A_{v_{\ell}}$, 
vertex $w$ has at least $p+1$ neighbors in $S_\ell\cup\{v_{\ell}\}$, which contradicts Claim~\ref{imp}~(d).

If $|A_{v_0}|=|A_{v_\ell}|=p$, then $w$ has $p-1$ neighbors in $A_{v_\ell}$ and $2$ neighbors in $A_{v_0}\setminus(\{y_0\}\cup A_{v_\ell})$. Thus, $w$ has at least $p+1$ neighbors in $S_\ell\cup\{v_\ell\}$.

If $|A_{v_{0}}|=p-1$ and $|A_{v_{\ell}}|=p$, then vertex $w$ has $p-1$ neighbors in $A_{v_{\ell}}$. Since $|A_{v_{0}}|=p-1$, then there is at least one vertex $\hat{y}$ in $A_{v_0}\setminus( A_{v_{\ell}}\cup\{y_0\})$ and by Claim~\ref{0p1}~(b),
 there are two nonadjacent vertices $x_{0}$ and $x'_{0}$ in $N(v_{0})\cap S_{0}$ which are not in $A_{v_0}$ and are in $S_{\ell}$.
 Since $H[A_{v_{\ell}}]$ is isomorphic to $K_p$, at most one of $x_0$ and $x'_0$ cannot be in $A_{v_{\ell}}$. 
Therefore, $w$ has at least $p+1$ neighbors in $A_{v_\ell}\cup\{ x_{0}, x'_{0}, \hat{y}\}$.

Assume that $|A_{v_{\ell}}|=p-1$. Therefore, there are two nonadjacent vertices $x_{\ell}$ and $x'_{\ell}$ in $N(v_{\ell})\cap S_{\ell}$ which are not in $A_{v_\ell}$ and are adjacent to all vertices in $A_{v_\ell}$. Hence, $w$ has $p$ neighbors in $A_{v_\ell}\cup\{x_{\ell}, x'_{\ell}\}$. If $|A_{v_{0}}|=p$, then there are two vertices in $A_{v_0}\setminus(\{y_0\}\cup A_{v_{\ell}})$. Since $x_{\ell}$ and $x'_{\ell}$ are not adjacent and $H[A_{v_{0}}]$ is isomorphic to $K_p$, we conclude that there is at least one vertex in $A_{v_0}\setminus(A_{v_\ell}\cup\{x_{\ell}, x'_{\ell}, y_0\})$. Therefore, $w$ has at least $p+1$ neighbors in $S_{\ell}\cup\{v_\ell\}$.
 
If $|A_{v_{0}}|=p-1$, then there is at least one vertex $\hat{y}$ in $A_{v_0}\setminus( A_{v_{\ell}}\cup\{y_0\})$ and
by Claim~\ref{0p1}~(b) there are two nonadjacent vertices $x_{0}$ and $x'_{0}$ in $N(v_{0})\cap S_{0}$ which are not in $A_{v_0}$ and 
 are in $S_{\ell}$. Therefore,  one can check that $w$ has at least $p+1$ neighbors in $A_{v_\ell}\cup\{x_{\ell}, x'_{\ell}\}\cup\{ x_{0}, x'_{0}, \hat{y}\}$.
\end{proof}

\begin{claim}
If $| A_{v_0}\cap A_{v_{\ell}}|\geq p-2$, then
$| A_{v_0}|=p$ and  $|A_{v_{\ell}}|= p$.
\end{claim}	
		
\begin{proof}

\noindent Assume that $|A_{v_0}|=p-1$. Since $| A_{v_0}\cap A_{v_{\ell}}|\geq p-2$, and $y_0\not \in A_{v_\ell}$ we have $| A_{v_0}\cap A_{v_{\ell}}|= p-2$.
We have $H[S_{\ell}\cup\{v_\ell\}]$ contains a copy of $K_p$. By the definition of $A_{v_{\ell}}$, this copy consists of $v_{\ell}$, $ A_{v_0}\cap A_{v_{\ell}}$ and 
another vertex $z$. The vertex $z$ is one of $x_0$ and $x'_0$; otherwise each vertex $w\in A_{v_0}\cap A_{v_{\ell}}$ has at least $p+1$ neighbors in $S_{\ell}\cup\{v_\ell\}$,
which contradicts with Claim~\ref{imp}~(d). By symmetry assume that $z=x_0$. Therefore, in $S_{\ell}$ the vertex
$v_{\ell}$ is adjacent to $p-2$ vertices of $ A_{v_0}\cap A_{v_{\ell}}$ and $x_0$.

If $v_{\ell}$ is adjacent to $y_0$, then $H[A_{v_0}\cup\{v_\ell , x_0\}]$ is isomorphic to $K_{p+1}$, a contradiction.
So we may assume that $v_{\ell}$ is not adjacent to $y_0$.

Since for each $w\in A_{v_\ell}\cap A_{v_0}$  we have $({S_\ell}\cup\{v_{\ell}\})\setminus\{w\}$ is maximum $K_p$-free and by Claim~\ref{imp}~(c),
$w$ is  in a copy of $K_q$ in  $H[(\overline{S_\ell}\setminus\{v_{\ell}\})\cup\{w\}]$. Since $w$ has $p$ neighbors in ${S_\ell}\cup\{v_{\ell}\}$, it implies that
$w$ has $q-1$ neighbors in $\overline{S_\ell}\setminus\{v_{\ell}\}$.

The vertex $w$ is  in a copy of $K_q$ in  $H[(\overline{S_\ell}\setminus\{v_{\ell}\})\cup\{w\}]$,
so $w$ has exactly xt $q-1$ neighbors in $\overline{S_\ell}\setminus\{v_{\ell}\}$, and
 $w$ is adjacent to $y_0$. Therefore, $y_0$ must be in this copy $K_q$ in  $H[(\overline{S_\ell}\setminus\{v_{\ell}\})\cup\{w\}]$ and consequently 
$y_0$ has  $q-2$ neighbors in $\overline{S_\ell}$. Hence,
 $y_0$ has $p+1$ neighbors in $S_\ell$, namely  $p-2$ vertices in $A_{v_0}\cap A_{v_\ell}$, $x_0$, $x'_0$, and $v_1$ in $S_\ell$. 
Each vertex $w$ in $A_{v_0}\cap A_{v_\ell}$ is not adjacent to $v_1$; 
otherwise if $v_1$ is adjacent to  $w_0\in A_{v_0}\cap A_{v_\ell}$, then $w_0$ has $p+1$ neighbors $S_\ell$, a contradiction with Claim~\ref{imp}~(d).
 So
each copy of $K_p$ in $H[S_{\ell}\cup\{v_\ell, y_0\}]$ must contain at least one vertex ${\hat w}$ in $A_{v_0}\cap A_{v_\ell}$. 
Therefore, the induced subgraph of $H$ on $(S_{\ell}\cup\{v_\ell, y_0\})\setminus\{{\hat w}\}$ is $K_p$-free, a contradiction with $S_{\ell}$
 is maximum  $K_p$-free subset of $H$. Therefore we can assume that $|A_{v_0}|=p$.

 Assume that $|A_{v_\ell}|=p-1$. Since $|A_{v_0}\cap A_{v_\ell}|\geq p-2$ and  $v_{\ell}\not\in A_{v_0}$, we have $|A_{v_0}\cap A_{v_\ell}|= p-2$. 
Therefore, $|A_{v_0}\setminus (A_{v_\ell}\cup\{y_0\})|=1$. Assume that $\{{\hat v}\}=A_{v_0}\setminus (A_{v_\ell}\cup\{y_0\})$. 

Let $x_{\ell}, x'_{\ell}$ be two nonadjacent vertices in $S_{\ell}$ that are adjacent to $v_{\ell}$.
The vertex ${\hat v}$ is one of $x_{\ell}, x'_{\ell}$; otherwise each vertex $w$ has $p+1$ neighbors in $S_{\ell}\cup\{v_\ell\}$, a contradiction with Claim~\ref{imp}~(d).
Assume that ${\hat v}=x_{\ell}$.

If $y_0$ is adjacent to $v_\ell$, then $H[A_{v_{\ell}}\cup\{x_{\ell}, y_0\}]$ is a copy of $K_{p+1}$ a contradiction.
Therefore, we may suppose  that  $y_0$ is not adjacent to $v_\ell$.

Since $S_{\ell}$ is a maximum $K_p$-free subset of $H$, $v_{\ell}\in B_{y_{\ell-1}}$,
 and the number of $K_q$ of $H[\overline{S_\ell}\setminus \{v_{\ell}\}]$ is less than that of $H[\overline{S_0}]$, 
by using Claim~\ref{imp}~(c), we obtain for each $w\in A_{v_{\ell}}\cap A_{v_0}$ 
the induced subgraph on $(\overline{S_\ell}\setminus\{v_{\ell}\})\cup\{w\}$ has a copy of $K_q$ including $w$.
Note that $w$ has $p$ neighbors $({S_\ell}\cup\{v_{\ell}\})\setminus\{w\}$ and then exactly $q-1$ neighbors in 
$(\overline{S_\ell}\setminus\{v_{\ell}\})\cup\{w\}$. Since $y_0$ is adjacent to $w$, it follows that $y_0$ lies in this copy of $K_q$ in
$H[(\overline{S_\ell}\setminus\{v_{\ell}\})\cup\{w\}]$. Therefore, we can deduce that
 $y_0$ has at least $q-1$ neighbors in $H[(\overline{S_\ell}\cup\{w\})\setminus\{v_{\ell}\}]$ and consequently at most 
$p$ neighbors in $H[({S_\ell}\cup\{v_{\ell}\})\cup\{w\}]$.

Note that $({S_\ell}\cup\{v_{\ell}\})\setminus\{w\}$ is a maximum $K_p$-free subset of $H$.
Therefore, the  induced subgraph $H[({S_\ell}\cup\{v_\ell, y_0\})\setminus \{w\}]$ contains a copy of $K_p$ including $y_0$.
Then, the vertex set of this copy of $K_p$ must be contained  in neighbors of $y_0$ in $({S_\ell}\cup\{v_\ell, y_0\})\setminus \{w\}$.

The vertex $y_0$ in $({S_\ell}\cup\{v_\ell, y_0\})\setminus \{w\}$ is adjacent to at most $p$ neighbors, namely $v_1$, ${\hat v}=x_\ell$,
 $p-3$ vertices in $A_{v_0}\cap A_{v_\ell}\setminus\{w\}$ and possibly another vertex $z$. 
 Note that both  $v_1$ and $z$ are not adjacent to any  vertex in $A_{v_0}\cap A_{v_\ell}\setminus\{w\}$; otherwise if there is a vertex $w_0$ in
$A_{v_0}\cap A_{v_\ell}\setminus\{w\}$ is adjacent to $v_1$ (or $z$), then  $w_0$ is adjacent to $p+1$ neighbors in $S_{\ell}$
that is  $v_1$(or $z$), $x_\ell, x'_\ell$ and $p-2$ neighbors in $A_{v_\ell}$, a contradiction with  Claim~\ref{imp}~(d).

Since there exists no edge between the set $\{v_1,z\}$ and $A_{v_0}\cap A_{v_\ell}\setminus\{w\}$ and $y_0$ lies in a copy of $K_p$
in $H[({S_\ell}\cup\{v_\ell, y_0\})\setminus \{w\}]$, it follows that this copy must consist of  $y_0,v_1,x_{\ell},z$. Therefore,  we can say $p=4$ and 
surely there exists $p$th neighbor of $y_0$ in $({S_\ell}\cup\{v_\ell, y_0\})\setminus\{w\}$, that is $z$.

We have $y_0$ is in a copy of $K_q$ in $H[(\overline{S_\ell}\cup\{w\})\setminus\{v_{\ell}\}]$ 
and the number of copies $K_q$ in $H[\overline{S_\ell}\setminus \{v_{\ell}\}]$ is
equal to the number of copies $K_q$ in $H[(\overline{S_\ell}\cup\{w\})\setminus\{v_{\ell},y_0\}]$. Then 
the number of copies $K_q$ in $H[(\overline{S_\ell}\cup\{w\})\setminus\{v_{\ell},y_0\}]$ is less than that of $H[\overline{S_0}]$.
By using Claim~\ref{imp}~(c), we obtain $x_\ell$ lies in a copy of $K_q$ in $H[(\overline{S_\ell}\cup\{w, x_\ell\})\setminus\{v_{\ell},y_0\}]$.
Then $x_\ell$ has $q-1$ neighbors in $(\overline{S_\ell}\cup\{w, x_\ell\})\setminus\{v_{\ell},y_0\}$.
The vertex $x_\ell$ has $p+1$ neighbors in 
$({S_\ell}\cup\{v_\ell, y_0\})\setminus \{w, x_{\ell}\}$, namely $y_0,v_1,z$ and $p-2$ neighbors in $A_{v_\ell}\setminus\{w\}$. Therefore,
$\deg(x_{\ell})\geq p+q$, a contradiction.
\end{proof}


Now we are in position to complete the proof of Theorem~\ref{mth1}
\begin{claim}
If $| A_{v_0}\cap A_{v_{\ell}}|= p-2$, then $K_{p+1}\subset H$.
\end{claim}	
\begin{proof}
Assume that  $|A_{v_0}|=|A_{v_\ell}|=p$ and  $| A_{v_0}\cap A_{v_{\ell}}|\geq p-2$.
Assume that $w\in A_{v_0}\cap A_{v_{\ell}}$. Therefore, $w$ has $p$ neighbors in $S_{\ell}\cup\{v_{\ell}\}$ and 
by Claim~\ref{imp}~{(c)}, it follows that $w$ lies in a copy of $K_q$ in $H[({\overline {S_{\ell}}}\setminus\{v_{\ell}\})\cup\{w\}]$.
Since $w$ has exactly $q-1$ neighbors in $({\overline {S_{\ell}}}\setminus\{v_{\ell}\})\cup\{w\}$ and $w$ is adjacent to $y_0$, we can deduce that
$y_0$ must be in this copy of $K_q$ in  $H[({\overline {S_{\ell}}}\setminus\{v_{\ell}\})\cup\{w\}]$.
Therefore, $y_0$ has at least $q-1$ neighbors in  $({\overline {S_{\ell}}}\setminus\{v_{\ell}\})\cup\{w\}$ and consequently $y_0$
 has at most $p$ neighbors in $({ {S_{\ell}}}\cup\{v_{\ell}\})\setminus\{w\}$.

 Since $H[(S_{\ell}\cup\{v_{\ell}\})\setminus\{w\}]$  is a maximum $K_p$-free subset in $H$,
 it follows that $H[(S_{\ell}\cup\{v_{\ell},y_0\})\setminus\{w\}]$ contains a copy of $K_p$ including $y_0$.

We have $|A_{v_\ell}\setminus A_{v_0}|=2$. Assume that $A_{v_\ell}\setminus A_{v_0}=\{v_{\ell},z\}$. If $y_0$ is adjacent to both vertices in $A_{v_\ell}\setminus A_{v_0}$, then $H[A_{v_\ell}\cup\{y_0\}]$ is a copy of $K_{p+1}$. So, we may assume that $y_0$ is adjacent to at most one of $v_{\ell}$ and $z$.

Assume that $y_0$ is adjacent to $v_{\ell}$ and is not adjacent to $z$. Let $\hat{v}$ be the only vertex of $A_{v_0}\setminus( A_{v_\ell}\cup\{y_0\})$. If $\hat{v}$ is adjacent to $v_{\ell}$, then the induced subgraph on $A_{v_0}\cup\{v_{\ell}\}$ is a copy of $K_{p+1}$, a contradiction. Therefore, we may assume that $\hat{v}$ is not adjacent to $v_{\ell}$.

 Since $y_0$ is adjacent to both $\hat{v}$ and $v_{\ell}$ and they are not adjacent to each other, the copy of $K_p$ in $H[(S_{\ell}\cup\{v_{\ell},y_0\})\setminus\{w\}]$ including $y_0$, can
 contain only one of  $\hat{v}$ and $v_{\ell}$ and 
 so it must contain all other  $p-2$ neighbors of $y_0$ in $(S_{\ell}\cup\{v_{\ell},y_0\})\setminus\{w\}$, that is,
 all $p-3$ vertices in $A_{v_0}\cap A_{v_{\ell}}\setminus\{w\}$ and $v_1$.
  Consider a vertex $w'\in (A_{v_\ell}\cap A_{v_0})\setminus\{w\}$ because $p\geq 4$ 
the existence of this vertex is guaranteed. The vertex $w'\in (A_{v_\ell}\cap A_{v_0})\setminus\{w\}$
has $p$ neighbors in $S_{\ell}\cup\{v_{\ell}\}$ and all of these $p$ neighbors are in $A_{v_0}\cup A_{v_{\ell}}\setminus\{y_0\}$.
  Since $v_1$ is not in  $A_{v_0}\cup A_{v_{\ell}}$, $w'$
  is not adjacent $v_1$, a contradiction.
If $y_0$ is not adjacent $v_{\ell}$ and is  adjacent to $z$, then by the same argument we reach a contradiction.

 So suppose that $y_0$ is adjacent to neither $v_{\ell}$ nor  $z$. 
The vertex $y_0$ has at most $p$ neighbors in $(S_{\ell}\cup\{v_{\ell}\})\setminus\{w\}$.
 It is adjacent to $p-3$ neighbors in $(A_{v_\ell}\cap A_{v_0})\setminus\{w\}$, $v_1$, $\hat{v}$, and possibly another vertex in $(S_{\ell}\cup\{v_{\ell}\})\setminus\{w\}$, 
 say $u$.
Each vertex $w'$ in $(A_{v_0}\cap A_{v_\ell})\setminus\{w\}$ is not adjacent $v_1$ and  is not adjacent to $u$ because by using Claim~\ref{imp}~{(d)},  $w'$ has 
exactly $p$ neighbors in $S_{\ell}\cup\{v_{\ell}\}$ and consequently  has $p$ neighbors in
$(S_{\ell}\cup\{v_{\ell},y_0\})\setminus\{w\}$, namely $y_0$, $\hat v$, all $p-2$ vertices in $A_{v_{\ell}}\setminus\{w,w'\}$.
Since $y_0$ has at most $p$ neighbors in $(S_{\ell}\cup\{v_{\ell},y_0\})\setminus\{w\}$,  the copy of $K_p$ in $H[(S_{\ell}\cup\{v_{\ell},y_0\})\setminus\{w\}]$ including 
$y_0$, must contain at least one of 
$v_1$ and $u$, so it cannot contain any vertex of $A_{v_{\ell}}\setminus\{w,w'\}$.
Therefore, this copy of $K_p$ must consist of $y_0,v_1,\hat{v}, u$. Then, 
  we can say $p=4$ and 
surely there exists $p$th neighbor of $y_0$ in $({S_\ell}\cup\{v_\ell, y_0\})\setminus\{w\}$, that is $u$.

Consider $\hat{v}$. It is adjacent to $p (=4)$ vertices in $(S_{\ell}\cup\{v_{\ell},y_0\})\setminus\{w\}$ and so has at most $q-1$ neighbors in 
 $({\overline {S_{\ell}}}\setminus\{v_{\ell},y_0\})\cup\{w\}$. Since $\hat v$ is in this copy of $K_4$ with vertices $y_0,v_1,\hat{v}, u$, 
 we have $(S_{\ell}\cup\{v_{\ell},y_0\})\setminus\{w,{\hat v}\}$
 is $K_4$-free with maximum size, therefore ${\hat v}$ must lie in a unique copy of $K_q$ including $w$ and all of its $q-2$ neighbors in
  $({\overline {S_{\ell}}}\setminus\{v_{\ell},y_0\})$. Therefore, $y_0,{\hat v}$, $p-2$ vertices in $A_{v_0}\cap A_{v_\ell}$, and $q-2$ neighbors of $y_0$ in
  $({\overline {S_{\ell}}}\setminus\{v_{\ell},y_0\})$ form a copy of $K_{p+q-2}$, a contradiction.

\end{proof}	
\begin{claim}
If $| A_{v_0}\cap A_{v_{\ell}}|= p-1$, then $K_{p+1}\subset H$.
\end{claim}	
		
\begin{proof}
From  $| A_{v_0}| =|A_{v_{\ell}}|= p$ and $| A_{v_0}\cap A_{v_{\ell}}|= p-1$, we have
$A_{v_\ell}\setminus A_{v_{0}}=\{v_{\ell}\}$, $A_{v_0}\setminus A_{v_{\ell}}=\{y_{0}\}$. 
We may assume that $y_0$ is not adjacent to $v_\ell$, as otherwise the induced subgraph on $A_{v_\ell}\cup\{y_0\}$ would be isomorphic to $K_{p+1}$.
 It is worth noting that each vertex $w\in A_{v_0}\cap A_{v_\ell}$ has at least $p-1$ and 
 by Claim~\ref{imp}~(d) at most $p$ neighbors in $S_\ell\cup\{v_\ell\}$. To complete proof, we consider the following  two cases.

 \item[\bf Case 1:] 
   There is a vertex $w\in A_{v_0}\cap A_{v_{\ell}} $ such that 
$w$ has exactly $p$ neighbors in $S_{\ell}\cup\{v_{\ell}\}$.

Note that $v_1$ is adjacent at most one vertex of $A_{v_0}\setminus \{y_0\}$; otherwise assume that $v_1$ is adjacent 
two vertices $w_0, w'_0$ of $A_{v_0}\setminus \{y_0\}$.
Since  $A_{v_0}\setminus \{y_0\}\subseteq S_1$, $v_1$ has at most $p$ neighbors in $S_1$, and  $w_0, w'_0$ are adjacent, at least one of $w_0, w'_0$
 must be in $A_{v_1}$, which contradicts with $A_{v_0}\cap A_{v_1}=\varnothing$.

 By Claim~\ref{imp}~(c),  $H[({\overline {S_{\ell}}}\setminus\{v_{\ell}\})\cup\{w\})]$ contains a copy of $K_q$ that includes $w$. 
 As $w$ has exactly $p$ neighbors in $S_\ell\cup\{v_\ell\}$, it has $q-1$ neighbors in ${\overline {S_{\ell}}}\setminus\{v_{\ell}\}$. 
 Thus, since $w$ is adjacent to $y_0$ and $y_0$ is in ${\overline {S_{\ell}}}\setminus\{v_{\ell}\}$, the copy of $K_q$ must contain $y_0$. 
 Therefore, $y_0$ has at least $q-1$ neighbors in $({\overline {S_{\ell}}}\setminus\{v_{\ell}\})\cup\{w\}$ and at most $p$ neighbors in $(S_{\ell}\cup\{v_{\ell}\})\setminus\{w\}$.

For the vertex $w$, since $S_{\ell}$ is the maximum $K_p$-free subset in $H$, $H[({{S_{\ell}}}\cup\{v_{\ell},y_0\})\setminus\{w\}]$ contains a copy of $K_p$ that includes $y_0$. If this copy of $K_p$ contains $v_1$, then it can contain at most one vertex of $A_{v_0}\cap A_{v_{\ell}}$, say $w_0$. Since $p\geq 4$, this copy of $K_p$ in $H[({{S_{\ell}}}\cup\{v_{\ell}, y_0\})\setminus\{w\}]$ must contain at least one vertex $u$ besides $y_0$, $v_1$, and $w_0$. Consequently, in $S_{\ell}\cup\{v_{\ell}\}$, $w_0$ is adjacent to $v_1$, $u$, and $p-1$ neighbors in $A_{v_{\ell}}$, which means that $w_0$ has $p+1$ neighbors in $S_{\ell}$, a contradiction with Claim~\ref{imp}~(d).

If this copy of $K_p$ does not contain $v_1$, then it must contain all $p-2$ vertices in $A_{v_0}\cap A_{v_{\ell}}\setminus\{w\}$, 
as well as a vertex $u$ in $({{S_{\ell}}}\cup\{v_{\ell}\})\setminus\{w\}$ and outside of $A_{v_0}\cap A_{v_\ell}$.
 As a consequence, $y_0$ has exactly $p$ neighbors in $(S_{\ell}\cup\{v_{\ell}, y_0\})\setminus\{w\}$ 
 and thus, $y_0$ has exactly $q-1$ neighbors in $({\overline {S_{\ell}}}\setminus\{v_{\ell}, y_0\})\cup\{w\}$. 
 Therefore, $y_0$ has exactly $q-2$ neighbors in ${\overline {S_{\ell}}}\setminus\{v_{\ell}\}$.
  Moreover, each vertex $w'\in A_{v_0}\cap A_{v_{\ell}}\setminus\{w\}$ is adjacent to $u$ in $S_{\ell}\cup\{v_{\ell}\}$,
   and thus, $w'$ has $p$ neighbors in $S_{\ell}\cup\{v_{\ell}\}$, namely all $p-1$ vertices of $A_{v_{\ell}}$ and $u$. 
   By Claim~\ref{imp}~(c), $H[({\overline {S_{\ell}}}\setminus\{v_{\ell})\cup\{w'\}\} $
contains a copy of $K_q$ that includes $w'$. 
   Since $w'$ is adjacent to $y_0$ and $y_0$ is in ${\overline {S_{\ell}}}\setminus\{v_{\ell}\}$, 
   the copy of $K_q$ must contain $y_0$ and its neighbors in ${\overline {S_{\ell}}}\setminus\{v_{\ell}\}$. 
   Therefore, $H[A_{v_0}\cup (N(y_0)\cap \overline{ S_{\ell}})]$ is isomorphic to $K_{p+q-2}$.

 \item[\bf Case 2:] Each vertex $w\in A_{v_0}\cap A_{v_{\ell}} $ has
 exactly $p-1$ neighbors in $S_{\ell}\cup\{v_{\ell}\}$.
 
  Fix a  vertex $w\in A_{v_0}\cap A_{v_{\ell}}$. As  $S_{\ell}$  is maximum  $K_p$-free subset in $H$, we have $H[({ {S_{\ell}}}\cup\{v_{\ell},y_0\})\setminus\{w\}]$ 
 has a copy of $K_p$ including $y_0$.  All $p-1$ neighbors of each vertex of $A_{v_0}\cap A_{v_{\ell}}$ in  $H[S_{\ell}\cup\{v_\ell\}]$ lie in $A_{v_{\ell}}$. Therefore,
 each vertex of $A_{v_0}\cap A_{v_{\ell}}\setminus\{w\}$ has $p-2$ neighbors $(S_{\ell}\cup\{v_\ell\})\setminus\{w\}$ including $v_{\ell}$. Therefore,
 this copy of $K_p$ including $y_0$ does not contains any vertex of $A_{v_0}\cap A_{v_{\ell}}\setminus\{w\}$.
Since $p\geq 4$, $y_0$ has at least three neighbors in this copy of $K_p$ which are not in  $A_{v_0}\setminus\{y_0\}$. 
Therefore, $y_0$ has $p+2$ neighbors in $S_{\ell}$ and at most $q-3$ neighbors in $\overline{S_{\ell}}$.

Since $y_0$ lies in a copy of $K_q$ in  $\overline{S_1}$, so it has at least  $q-1$ neighbors in $\overline{S_0}\setminus\{v_0\}$ and 
it has at most $p$ neighbors  $S_0\cup\{v_0\}$. But $y_0$ has $p+2$ in $S_{\ell}$.
So there are at least three neighbors of $y_0$ in $S_{\ell}$ and out of 
$A_{v_0}\cap A_{v_{\ell}}$. Assume that one of them is $v_1$ and  the two other are $z,z'$. 
Then $\{v_1,z,z'\}\subseteq S_{\ell}$. Assume that $\iota$ is a smallest index for which
$\{v_1,z,z'\}\subseteq S_{\iota}$. Since $\iota$ is a smallest index for which
$\{v_1,z,z'\}\subseteq S_{\iota}$, we may assume that $z'={v_{\iota-1}}$.
Since $z'\neq v_1$, we have $\iota-1>1$.
Since $\{v_1,z\}\subseteq S_{\iota-1}$ and by Claim~\ref{0p1}~(a) we have $A_{v_0}\setminus\{y_0\}\subseteq S_{\iota-1}$, 
we deduce that  $y_0$ has at least $p+1$ neighbors in $S_{\iota-1}$ and 
at most $q-2$ neighbors in $\overline{S_{\iota-1}}$. Therefore,
the vertex $y_0$  does not lie any copy of $K_q$ in $H[\overline{S_{\iota-1}}]$.
The vertex  $z'={v_{\iota-1}}$ lies in at least one copy of $K_q$ in $H[\overline{S_{\iota-1}}]$. This copy does not contain $y_0$.
Therefore, $z'={v_{\iota-1}}$ has at least  $q$ neighbors in $\overline{S_{\iota-1}}$ and consequently at most $p-1$ neighbors in ${S_{\iota-1}}$.
 Since  $S_{\iota-1}$  is maximum  $K_p$-free subset of $H$, we have $H[S_{\iota-1}\cup\{v_{\iota-1}\}]$ must contain a copy of $K_p$.
 Since  $z'={v_{\iota-1}}$ has at most $p-1$ neighbors in ${S_{\iota-1}}$, it must lie in a unique copy of $K_p$ in $H[S_{\iota-1}\cup\{v_{\iota-1}\}]$ and $|A_{v_{\iota-1}}|=p$.
This copy must contain $v_1$. Therefore, $v_1\in A_{v_1}\cap A_{v_{\iota-1}}$ and consequently $A_{v_1}\cap A_{v_{\iota-1}}\neq \varnothing$, a contradiction.

\end{proof}

\end{proof} 
	\begin{proof}[Proof of Corollary~\ref{cor1}] 
Since $\sum_{i=1}^k p_i=\Delta(H)-1+k$, we have $\Delta(H)-1+k\geq 11+2(k-3)$ and consequently $\Delta(H)\geq k+6\geq 9$.
We set $p=p_1$ and $q=\sum_{i=2}^kp_i-(k-2)$. We have $\omega(H)=p$ and  $\Delta(H)+1-p=q$
and $q\geq 7+2(k-3)-(k-2)=3+k\geq 6$.
 By using Theorem~\ref{mth1},  there exists  $V_1\subseteq V(H)$ such that $V_1$ is a maximum $K_p$-free subset of $H$,
		and $H[V\setminus V_1] $ is $K_q$-free.	Now consider the graph  $H[V\setminus V_1]$. Its clique number is at most $q-1$.
		Since is $V_1$ a maximum $K_p$-free subset of $H$, the maximum degree of $H[V\setminus V_1]$ at most  $\Delta(H)-(p-1)=q$.
		By similar argument to in the proof of Theorem~\ref{mainthm} we may assume that the maximum degree of $H[V\setminus V_1]$ is equal to $q$.
				Note that $\sum_{i=2}^kp_i=q-1+(k-1)$ and $p_2+p_3\geq 7$.
				By applying Theorem~\ref{mainthm} for $H[V\setminus V_1]$, we get that there exists a partition of $V\setminus V_1$
				into $V_2,V_3,\ldots, V_k$
		such that for each $2\leq i\leq k$, $H[V_i]$ is $K_{p_i}$-free.
	
	\end{proof} 
	

	
	\section{Concluding remarks }
	In this section, we pose three  questions that  remain unsolved as parts of Question~\ref{genBK}.
	\begin{question} \label{qu3}
		\item[(a)]  Let $H$ be a graph  with $\omega(H)\leq \Delta(H)-1$. 
		Assume  that  $p_1\geq p_2\geq\cdots\geq p_k\geq 2$  are  $k$ positive integers and  $\sum_{i=1}^k p_i=\Delta(H)-1+k$. If  $p_1+p_2\leq 6$ and 
		$p_1\geq 3$, then
		does there exist a partition of  $V(H)$ into $V_1,V_2,\ldots, V_k$
		so that $H[V_i]$ is $K_{p_i}$-free for each $1\leq i\leq k$?
		\item[(b)] Suppose that $H$ is a graph with $\Delta(H)=5$ and clique number $\omega(H)\leq 4$. 
		Does there exist a partition of $V(H)$ into $V_1$ and $V_2$
		such that both of  $H[V_1]$ and $H[V_2]$ are $K_3$-free?
	 		\item[(c)] Suppose that $H$ is a graph with $\Delta(H)= 6$ and clique number $\omega(H)\leq 5$. 
		Does there exist a partition of $V(H)$ into $V_1,V_2, V_3$
		such that $H[V_1]$ is $K_4$-free and both $H[V_2]$ and $H[V_3]$ are $K_2$-free?
	\end{question}
	Note that in Remark~\ref{rmk1} we give some negative answers for Question~\ref{qu3}~(a), for the following cases: 
	\begin{itemize}
	\item $k=2$, $p_1=3$, and $p_2=2$ (see Remark~\ref{rmk1}~(a)).
	\item $k=2$, $p_1=4$, $p_2=2$, (see Remark~\ref{rmk1}~(b)).
	\item $k=3$, $p_1=3$, $p_2=2$, and $p_3=2$ (see Remark~\ref{rmk1}~(c)).
	\end{itemize}

	   It worth mentioning that by using Theorem~\ref{M.th2},  it can be shown that if Question~\ref{qu3}~(b) has an affirmative answer, then 
	  an affirmative answer to Question~\ref{qu3}~(a) can be obtained when $p_1=p_2=3$. 
	  Similarly,  if  Question~\ref{qu3}~(c) has an affirmative answer, 
	 from Theorem~\ref{M.th2} it follows that  
	   an affirmative answer to Question~\ref{qu3}~(a) can be obtained when $p_1=4$ and $k\geq 3$.

	\bibliographystyle{plain}

\end{document}